\theoremstyle{plain}
\newtheorem{theorem}{Theorem}
\newtheorem{corollary}{Corollary}
\newtheorem{proposition}{Proposition}
\newtheorem{lemma}{Lemma}
\theoremstyle{definition}
\newtheorem{definition}{Definition}
\theoremstyle{remark}
\newtheorem{remark}{Remark}
\newtheorem{example}{Example}
\newtheorem*{notation}{Notation}
\newcommand{\ti}{\tilde}
\newcommand{\wti}{\widetilde}
\newcommand{\ol}[1]{\overline{#1}}
\newcommand{\vil}{\varinjlim}
\def\bbC{\mathbb C}
\def\bbN{\mathbb N}
\def\bbR{\mathbb R}
\def\bbZ{\mathbb Z}
     \newcommand{\sA}{\mathcal A}
     \newcommand{\sB}{\mathcal B}
     \newcommand{\sG}{\mathcal G}
     \newcommand{\sH}{\mathcal H}
     \newcommand{\sK}{\mathcal K}
     \newcommand{\sS}{\mathcal S}
\newcommand{\al}{\alpha}
\newcommand{\be}{\beta}
\newcommand{\ep}{\epsilon}
\newcommand{\vpi}{\varphi}
\newcommand{\si}{\sigma}
\newcommand{\de}{\delta}
\newcommand{\om}{\omega}
\newcommand{\io}{\iota}
\newcommand{\Ga}{\Gamma}
\newcommand{\ga}{\gamma}
\begin{document}
\title[Semigroups and operator algebras]{Operator algebras and representations from commuting semigroup actions}

\author[Duncan]{Benton L.\ Duncan}
\address{Department of Mathematics\\
North Dakota State University\\
Fargo, North Dakota\\
USA} \email{benton.duncan@ndsu.edu}

\author[Peters]{Justin R.\ Peters$^{\ddag}$}
\address{Department of Mathematics \\
Iowa State University \\
Ames, Iowa\\
USA} \email{peters@iastate.edu}

\thanks{{$\ddag$} The author acknowledges partial support from the National Science Foundation, DMS-0750986}

\keywords{abelian semigroup, dynamical system, C$^*$-envelope,
tensor algebra, semicrossed product, Shilov representation}
\subjclass[2000]{47D03 primary; 46H25, 20M14, 37B99 secondary}

\begin{abstract}
Let $\sS$ be a countable, abelian semigroup of continuous
surjections on a compact metric space $X$. Corresponding to this
dynamical system  we associate two operator algebras, the tensor
algebra, and the semicrossed product. There is a unique smallest
C$^*$-algebra into which an operator algebra is completely
isometrically embedded, which is the C$^*$-envelope.  
The C$^*$-envelope of the tensor algebra is a crossed product C$^*$-algebra. We also study
two natural classes of representations, the left regular
representations and the orbit representations.  The first is Shilov,
and the second has a Shilov resolution.
\end{abstract}

\maketitle

\section{INTRODUCTION}

Let $X$ be a compact metric space, and $\sS$ an abelian semigroup
and let $\si$ be a map of $\sS$ into the set of continuous,
surjective maps of $X \to X$, which we assume to be a semigroup
isomorphism. From this dynamical system $(X, \si, \sS)$ we construct
two operator algebras: the tensor algebra, and the semicrossed
product.

If the semigroup $\sS$ is a group, then the tensor algebra and the
semicrossed product coincide with the crossed product,
$C(X)\rtimes_{\si} \sS.$ Our interest is in dealing with
noninvertible dynamics, so we will assume that the semigroup $\sS$
is not a group.

Work on such problems began with single-variable dynamics
\cite{Arveson-Josephson}, \cite{Peters:semicrossed} and many others.
Work with multivariate dynamics is more recent. This paper is in a
sense a counterpoint to the important contribution of Davidson and
Katsoulis \cite{DK:multivariate}, in which they studied various
operator algebras that could be considered multivariate analogues of
the (single variable) semicrossed product, and developed the
dilation theory and isomorphism properties of these algebras. \cite{KK} and \cite{F} are
also closely related. In
\cite{DKM}, Donsig, Katavolos and Manoussos give a precise description
of the Jacobson radical of semicrossed products, where the semigroup
is $\bbZ^d_+.$

While the point of view of C$^*$-dynamical systems mostly deals with
group actions on C$^*$-algebras, Exel \cite{Exel:endo} and Exel and
Renault \cite{Exel-Renault:groupoid} consider noninvertible
dynamical systems, such as local homeomorphisms on a compact space.
There is the additional feature of the transfer operator, which is
not present here. Nevertheless it is interesting to compare their
approach to the C$^*$-algebra which arises naturally in our context
as the C$^*$-envelope of the tensor algebra.

We begin by constructing an algebra $\sA_0$ which contains operators
$S_s$ for $s $ an element of the semigroup $\sS$, and functions $f
\in C(X),$ the continuous complex valued functions on $X$, subject
to the covariance condition
\[ f \,S_s = S_s\, f\circ \si_s \, . \]
An element of the algebra $\sA_0$ has the form $ \sum_s S_s\, f_x, $
where the sum is finite. We study classes of representations of this
algebra. One natural class of representations  arises from the
left regular representation on the Hilbert space $\ell_2(\sS)$ and the
evaluation map of functions at a point $x \in X.$ These
representations, denoted by $\pi$, represent the operators $S_s$ as
isometries, and they separate the points of $\sA_0.$  Completing
$\sA_0$ in the norm determined by these representations yields an
algebra $\sA(X, \sS)$ which we call the left regular algebra.

Another class of representations we study we call orbit
representations.  These are similar to the representations $\pi$,
except they act on the orbit of a point $x \in X.$   We denote the orbit
representations by $\rho$.  While orbit representations have been studied in the context of group actions,
the semigroup setting presents features not present when dealing with goup actions.  We
show these representations are associated with cocycles, and indeed
there is a one-to-one correspondence between the orbit
representations and the orbit cocycles.

We have defined two nonselfadjoint operator algebras arising from
the dynamical system $(X, \si, \sS).$ One is the tensor algebra,
already mentioned.  The other is the semicrossed product.  This is
the completion of the $\sA_0$ in the norm arising from considering
all isometric covariant representations (Definition~\ref{d:rep}).
However we have no tools to characterize all such representations,
so there is little we can say about such algebras.

Davidson and Katsoulis \cite{DK:multivariate} use the general approach of Katsura  \cite{Katsura} and Muhly and Solel
\cite{Muhly-Solel2} to obtain the tensor algebra and its C$^*$-envelope via C$^*$-correspondences.
Our approach to the C$^*$-envelope,
done in \cite{Peters:envelope} for the single variable
setting, yields a more tangible result, yet is only available in a
restricted context.

While the enveloping group $\sG$ containing the semigroup $\sS,$ is
easily obtained as $\sG = \sS - \sS,$ there need not be any
connection between the abstract group $\sG$ and mappings on the
compact metric space $X$. In section~\ref{s:extension} we construct
a compact metric space $\wti{X}$ on which the group $\sG$ acts by
homeomorphisms $\wti{\si_g} \ (g \in \sG)$ and a continuous
surjection $p: \wti{X} \to X$ which ``intertwines'' this group
action with the original semigroup action. Theorem~\ref{t:cstarenv}
shows that the C$^*$-envelope of the tensor algebra $\sA(X, \sS)$ is
identified with the C$^*$-crossed product
$C(\wti{X})\rtimes_{\wti{\si}} \sG.$

The description of the C$^*$-envelope in Theorem~\ref{t:cstarenv}
also yields some information about the left regular representations
$\pi$ and the (left regular) orbit representations $\rho$. We are
able to show that the representations $\pi$ are Shilov, and that the
left regular orbit representations have a Shilov resolution.

We should comment on the relation of our results with those of
\cite{DK:multivariate}. They consider actions of the free semigroup
on $n$-generators (for fixed $n \in \bbN$), whereas the semigroups we consider
are abelian. Even though we do not deal with specific examples of
dynamical systems in this paper, it is also worth noting that there
are actions which fall within our context which are not homomorphic
images of free finitely generated semigroups: in
\cite{Peters:semigroup}, Example 5, there is an action of the
semigroup of non-negative dyadic rationals on a compact metric space
$X$ by local homeomorphisms. We should also note that there is
relatively little overlap of our results with \cite{DK:multivariate}. Because Davidson and Katsoulis deal with
finitely many coordinates, they are able to obtain a number of
dilation results. However the example of Parrott \cite{Parrott} of
three commuting contractions which do not admit a unitary dilation
illustrates the inherent difficulty of a general dilation theory in
our setting.
What we are able to achieve, is a dilation of the
commuting contractions $\rho(S_s)\ (s \in \sS)$ to unitaries.  While there are a number of positive results in the literature, such as the dilation results for $n-$tuples of doubly commuting contractions, we are not aware that our dilation theory
overlaps with other such results.

Since this paper was written, we were given a preprint of Davidson, Fuller and Kakariadis (\cite
{DFK}) in which they obtain
another proof of our theorem~\ref{t:cstarenv}.

\section{BACKGROUND AND NOTATION}
\textbf{Standing Hypothesis} Throughout the paper, $ \sS$ will
denote an abelian semigroup with cancellation, and identity element,
denoted by $0$. The semigroup operation will be written as addition.
The intersection of all abelian groups which contain $\sS$ will be
written as $\sG = \sS - \sS.$

Some of the constructions, such as the left regular representation $\pi_x,$ and the orbit reprsentation
$\rho_x,$ do not require the commutativity of the semigroup. The tools we employ, such as integrating
over the compact dual group $\Ga$ of the group $\sG = \sS - \sS,$ do use commutativity. For that reason, we assume
commutativity of $\sS$ throughout the paper.

 We assume that $\sS$ acts on a compact metric space.
Thus, there is a homomorphism, denoted by $\si$, from $\sS$ into the
semigroup of continuous, surjective maps of $X \to X.$ There is no
loss of generality by assuming that $\si$ is a semigroup isomorphism
(onto its image), which we will do. Furthermore, we assume that
$\sS$ is not a group, for otherwise nothing new is achieved.
However, it may be the case that $\sS$ contains a nontrivial group
$\sS \cap -\sS.$ The triple $(X, \si, \sS)$ will be called a
dynamical system.

We will not keep repeating these assumptions in the statements of
our results.

\section{SEMICROSSED PRODUCTS} \label{s:semicrossed}

Let $\sA_0$ be the algebra generated by $C(X)$ together with symbols
$ S_s,\ s \in \sS$ and subject to the relations \\
\parbox{2cm}{ \begin{align*} f S_s = S_s f\circ\si_s, &\quad s \in
\sS,\ f \in C(X) \\ S_{s+t} = S_{s}S_{t}, &\quad s, t \in \sS
\end{align*}}\hfill \parbox{1cm}{(\dag)}

Thus a typical element of the algebra has the form
\[ \sum_{s \in \sS} S_s f_s \]
where the sum is finite.

Let $\Ga$ be the dual group of $\sG$.

\begin{definition} \label{d:gauge} Define the \emph{Gauge automorphism}
$\tau_{\ga}\ (\ga \in \Ga$) on $\sA_0$ by \[ \tau_{\ga} (\sum_s
S_s\,f_s) = \sum_s \langle \ga, s \rangle \,S_s\,f_s .\]
\end{definition}

Define the projections $P_s,\ s \in \sS$: for $F \in \sA_0,$
\[ P_s(F) = \int_{\Ga} \tau_{\ga}(F)\langle -\ga, s\rangle\,d\ga \]
where $d\ga$ is Haar measure on the compact group $\Ga.$ Note we are
considering the semigroup $\sS$ and the group $\sG$ as discrete
groups, and so $\Ga$ is a compact abelian group.

Note that if $F = \sum_s S_s\,f_s \in \sA_0,$ then $P_{s_0}(F)$ is
equal to either $S_{s_0}\, f_{s_0}$ or $0$ if $s_0$ is not in the
sum.

\begin{definition} \label{d:rep} We say
that a representation
\[ \pi: \sA_0 \to \sB(\sH) \] with the following properties:
\begin{enumerate}
\item $\pi(S_s)$ is an isometry (resp., a contraction) in $\sB(\sH)$ for all $s \in \sS;$
\item $\pi(S_0) = I;$
\item $\pi|_{C(X)}$ is a C$^*$-representation.
\end{enumerate}
is an isometric (resp., a contractive) covariant representation of
the pair $(C(X), \sS)$.
\end{definition}
Observe that $C(X)$ is embedded in $\sA_0$ by the map $ f \mapsto S_0 f .$

In \cite{DK:multivariate} Davidson and Katsoulis consider four sets
of conditions on representations. But two of those conditions do not
have a direct translation into this general context--namely, row
contractive and row isometric, since our semigroup need not be
freely generated by finitely many $S_s$.

\begin{definition} Let $C(X) \rtimes_{\si} \sS$ denote the semicrossed product
algebra; that is, the completion of $\sA_0$ with respect to the norm
\[ ||F|| = \sup_{\pi} ||\pi(F)||\]
for $F \in \sA_0$, where the supremum is over all representations
$\pi$ satisfying properties $(1), (2), (3)$ of the definition.
\end{definition}

\section{THE LEFT REGULAR ALGEBRA} \label{s:leftreg}

We now define a class of representations of $\sA_0$ which will play an
important role in what follows.

Given $x \in X$ and $\ga \in \Ga$ define a representation $\pi_{x,
\ga} $ of $\sA_0$ on the Hilbert space $\ell_2(\sS)$ as follows: Let $\xi_s \in
\ell_2(\sS)$ be given by
\[ \xi_s(t) =
\begin{cases}
1 \text{ if } t = s;\\
0 \text{ otherwise.}
\end{cases}
\]
It suffices to define $\pi_{x, \ga}(f),\ f \in C(X),$ and  $\pi_{x,
\ga}(S_t)$ on the vectors $\xi_s$ since linear combinations of such
vectors are dense. Set
\[ \pi_{x, \ga}(f)\xi_s = f\circ\si_s(x)\xi_s \]
and
\[ \pi_{x, \ga}(S_t)\xi_s = \langle\ga, t\rangle \xi_{t+s} .\]

It is a routine calculation to  verify that $\pi_{x, \ga}$ respects
the relations $\dag$.

The adjoint is given by

\[
\pi_{x, \ga}(S_t)^*\xi_s =
\begin{cases}
\ol{\langle\ga, t\rangle}\xi_{u} \text{ if } s = t + u \text{ for some } u \in \sS \\
0 \text{ otherwise. }
\end{cases}
\]
so that $ \pi_{x, \ga}(S_t)^* \pi_{x, \ga}(S_t)\xi_s = \xi_s$ for
all $s \in \sS,$ and since the set $\{ \xi_s :\ s \in \sS\}$ is an
orthonormal basis for $\ell_2(\sS),$ it follows $ \pi_{x,
\ga}(S_t)^* \pi_{x, \ga}(S_t) = I .$

It is obvious that $\pi_{x, \ga}$ is a $*$-representation when
restricted to $C(X).$ Thus it is an isometric covariant
representation.

\begin{remark} \label{r:regrep} Notice that the unitary given by $ \xi_s \mapsto \langle  - \ga, s \rangle \xi_s$ provides a unitary equivalence between the representation $ \pi_{x,\ga}$ and $ \pi_{x, 0 }$ and the representations $\pi_{x, 0}$ are the semigroup analogue of
the regular representations of crossed products, coming from the one
dimensional evaluation representations, as in \cite[7.7]{Pet}.
\end{remark}

\begin{lemma} \label{l:regseparating} Let $F \in \sA_0,\ F \neq 0.$
Then for some $x \in X,\ \ga \in \Ga,\  \pi_{x, \ga}(F) \neq 0.$
\end{lemma}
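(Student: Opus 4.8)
My plan is to detect the nonvanishing of $F$ by letting $\pi_{x,\ga}(F)$ act on the single basis vector $\xi_0 \in \ell_2(\sS)$. First I would put $F$ into the standard form $F = \sum_{s\in G} S_s f_s$ with $G \subseteq \sS$ finite and $f_s \in C(X)$, as described before the lemma (every element of $\sA_0$ admits such an expression, obtained by pushing $C(X)$-factors to the right past the $S_s$ via $fS_s = S_s\,f\circ\si_s$ and collapsing $S_sS_t = S_{s+t}$). Since it was already verified that $\pi_{x,\ga}$ respects the relations~(\dag), the operator $\pi_{x,\ga}(F)$ is well defined independently of which such expression is chosen, so this reduction is harmless.

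The computation to carry out is then short. Because $0$ is the identity of $\sS$, $\si_0 = \mathrm{id}_X$, so $\pi_{x,\ga}(f)\xi_0 = f(x)\,\xi_0$ for $f\in C(X)$, and hence
\[ \pi_{x,\ga}(F)\,\xi_0 \;=\; \sum_{s\in G} f_s(x)\,\langle\ga, s\rangle\,\xi_s . \]
The vectors $\{\xi_s : s\in G\}$ are orthonormal and each scalar $\langle\ga,s\rangle$ is unimodular, so $\|\pi_{x,\ga}(F)\,\xi_0\|^2 = \sum_{s\in G}|f_s(x)|^2$, which in particular does not involve $\ga$. If, to the contrary, $\pi_{x,\ga}(F) = 0$ held for every $x\in X$ (it is even enough to take $\ga$ the trivial character), this identity would force $f_s(x)=0$ for all $s\in G$ and all $x\in X$, i.e. every $f_s$ identically zero and hence $F=0$, against the hypothesis. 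Therefore $\pi_{x,\ga}(F)\neq 0$ for some $x$.

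I do not expect a genuine obstacle: the argument rests only on the fact, already established in the text, that the $\pi_{x,\ga}$ are honest isometric covariant representations, together with the orthonormality of the $\xi_s$. Two incidental remarks worth recording: the character $\ga$ plays no role in the proof, so one may fix it to be trivial; and applying the same computation to a difference $\sum_s S_s f_s - \sum_s S_s g_s$ shows at once that the standard form of an element of $\sA_0$ is unique, so that the phrase ``$F\neq 0$'' in the hypothesis is unambiguous.
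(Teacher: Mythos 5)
Your proof is correct, and it takes a genuinely different route from the paper's. The paper fixes one index $s_0$ in the support of $F$ and isolates the single coefficient $f_{s_0}$ by averaging $\pi_{x,\ga}(F)\xi_u\,\langle-\ga,s_0\rangle$ over the compact dual group $\Ga$ (orthogonality of characters), then chooses $x$ and $u$ with $f_{s_0}(\si_u(x))\neq 0$. You instead apply $\pi_{x,\ga}(F)$ to the single vector $\xi_0$ and observe that the shifts send $\xi_0$ to the pairwise orthogonal vectors $\xi_s$, so that $\|\pi_{x,\ga}(F)\xi_0\|^2=\sum_{s\in G}|f_s(x)|^2$ sees all coefficients at once with no averaging. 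Your argument is more elementary (no Haar integration), makes explicit that the trivial character already suffices, and yields the uniqueness of the standard form as a byproduct. What the paper's Fourier-averaging method buys is robustness: it is exactly the mechanism behind the projections $P_s$ and Proposition~\ref{p:nonzeroproj}, where one must handle elements of the completed algebra $\sA(\sS,X)$ that need not act on $\xi_0$ in any finitely supported way; your orthogonality trick is tied to the finite standard form available only in $\sA_0$. For the lemma as stated, both proofs are complete.
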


\begin{proof} By the remark it suffices to consider $ \ga $ the trivial character.  We write $F = \sum_{s \in I} S_s\, f_s$ where $I$ is a
finite subset of $\sS,$ and such that $f_s \neq 0$ for $s \in I.$
Let $u \in \sS,\ s_0 \in I$ and compute
\begin{align*}
\int_{\Ga} \pi_{x, \ga}(F)\xi_u \,  d\ga &=
\sum_{s\in
I} \int_{\Ga} \pi_{x, \ga}(S_s f_s)\xi_u\, d\ga \\
    &= \sum_{s\in I} \int_{\Ga} \langle\ga, s\rangle
    f_s(\si_u(x))\xi_{s+u} d\ga \\
    &= f_{s_0}(\si_u(x))\xi_{s+u}
\end{align*}
We may choose $x \in X$ and $u \in \sS$ such that $f_{s_0}(\si_u(x))
\neq 0.$ Thus, there is a choice of $ x \in X$ and $\ga \in \Ga$ for
which $\pi_{x, \ga}(F) \neq 0.$
\end{proof}

\begin{corollary} The class of representations $\pi_{x, \ga}, \ (x,
\ga) \in X\times \Ga,$ separates the elements of $\sA_0.$
\end{corollary}

\begin{definition} \label{d:leftreg}
We define the left regular algebra $\sA(\sS, X)$ to be the
completion of $\sA_0$ in the norm of the representation

\[ {\oplus_{(x, \ga) \in X\times \Ga}\, \pi_{x, \ga}}. \]
\end{definition}

\begin{remark} \label{r:leftreg}
The representations $\pi_{x, \ga},$ initially defined on the algebra $\sA_0,$ admit a unique extension to the left regular
algebra $\sA(\sS, X).$ The extended representations will also be denoted $\pi_{x, \ga}.$
\end{remark}

\begin{remark} \label{r:unitaryequiv}
In light of remark~\ref{r:regrep},
the norm on $\sA(\sS, X)$ could be defined using the subclass of representations $\pi_{x, 0}.$
\end{remark}

\begin{notation} We will write $\pi_x$ for $\pi_{x, 0}.$  In other words, if $\ga$ is the trivial character $0,$
we will omit the $0.$
\end{notation}

Let $F \in \sA(\sS, X),\ ||F|| = 1,$ and suppose that for all $u \in
\sS,\ P_u(F) = 0.$ Now there are $x \in X,$ and unit
vectors $\xi,\ \eta \in \ell_2(\sS)$ for which
\[ |(\pi_{x}(F)\xi, \eta)| > \frac{1}{2}||F|| .\] H
Here we are making use of Remark~\ref{r:unitaryequiv}, that it is sufficient to consider the
representations $\pi_x.$
Hence there exist $s,\ t \in \sS$ such that
\[ \ep := |(\pi_{x}(F)\xi_s, \xi_t)| > 0 .\]

Let $G \in \sA_0$ be such that $||F - G|| < \de$, where $ 0 < \de <
\ep/2 .$  Then
\begin{align*}
|(\pi_{x}(G)\xi_s, \xi_t)| &\geq |(\pi_{x}(F)\xi_s,
\xi_t)| - ||F - G|| \\
    &\geq \ep - \de \\
    &> \ep/2.
\end{align*}
Express $G = \sum_u S_u\,f_u.$ Now $|(\pi_{x}(G)\xi_s, \xi_t)|
> 0$ implies for $u = t - s \in \sS,\ f_u \neq 0.$ Thus for $u
= t - s$ we have
\[ P_u(G - F) = P_u(G) = S_u\,f_u .\]
Now \[|(\pi_{x}(G)\xi_s, \xi_t)| = |f_u(\si_s(x))| >\ep/2,\] so
that $||P_u(G)|| > \ep/2.$  On the other hand,
\[ ||P_u(G)|| = ||P_u(F - G)|| \leq ||F - G|| < \ep/2.\]
We have shown the following:
\begin{proposition} \label{p:nonzeroproj} If
$F$ is a nonzero element of $\sA(\sS, X)$  then there exists $u \in
\sS$ such that $P_u(F) \neq 0.$
\end{proposition}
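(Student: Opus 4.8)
The plan is to prove the contrapositive: if $P_u(F) = 0$ for every $u \in \sS$, then $F = 0$. Before starting I would check that each $P_u$ actually makes sense on the completion $\sA(\sS, X)$ and not merely on $\sA_0$. From the defining formula $\pi_{x, \ga'}(S_s)\xi_t = \langle \ga', s\rangle\,\xi_{s+t}$ one sees that $\pi_{x, \ga'}\circ\tau_\ga = \pi_{x, \ga\ga'}$ on the generators, hence on all of $\sA_0$; since $\ga' \mapsto \ga\ga'$ is a bijection of $\Ga$, it follows that each gauge automorphism $\tau_\ga$ is isometric for the norm of Definition~\ref{d:leftreg}. Consequently $\ga \mapsto \tau_\ga(F)$ is norm-continuous on the compact group $\Ga$, the Bochner integral $P_u(F) = \int_\Ga \tau_\ga(F)\langle -\ga, u\rangle\,d\ga$ defines an element of $\sA(\sS, X)$, and $||P_u(F)|| \le ||F||$.

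Now assume $F \ne 0$ and normalize $||F|| = 1$. Since the norm on $\sA(\sS, X)$ is the supremum of $||\pi_{x, \ga}(\cdot)||$ over $(x, \ga) \in X \times \Ga$, choose $(x, \ga)$ with $||\pi_{x, \ga}(F)|| > \frac{1}{2}$. The operator $\pi_{x, \ga}(F)$ on $\ell_2(\sS)$ is then nonzero, so --- expanding in the orthonormal basis $\{\xi_s : s \in \sS\}$ --- there are $s, t \in \sS$ with $\ep := |(\pi_{x, \ga}(F)\xi_s, \xi_t)| > 0$. Pick $G = \sum_u S_u f_u \in \sA_0$ with $||F - G|| < \de$ for some fixed $0 < \de < \ep/2$; then $|(\pi_{x, \ga}(G)\xi_s, \xi_t)| \ge \ep - \de > \ep/2$.

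Next I would identify the offending summand of $G$. The formula $\pi_{x, \ga}(S_u f_u)\xi_s = \langle \ga, u\rangle\, f_u(\si_s(x))\,\xi_{u+s}$ shows that $(\pi_{x, \ga}(G)\xi_s, \xi_t)$ vanishes unless $u_0 := t - s$ lies in $\sS$, in which case it equals $\langle \ga, u_0\rangle\, f_{u_0}(\si_s(x))$. Hence $u_0 \in \sS$, the coefficient $f_{u_0}$ is nonzero, and $|f_{u_0}(\si_s(x))| > \ep/2$. Since $P_{u_0}$ extracts exactly the term $S_{u_0} f_{u_0}$ from the finite sum $G$, and since $||\pi_{x, \ga}(S_{u_0} f_{u_0})\xi_s|| = |f_{u_0}(\si_s(x))|$, we obtain $||P_{u_0}(G)|| \ge |f_{u_0}(\si_s(x))| > \ep/2$.

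The contradiction now comes from the hypothesis $P_{u_0}(F) = 0$: by linearity $P_{u_0}(G) = P_{u_0}(G - F)$, so $||P_{u_0}(G)|| = ||P_{u_0}(G - F)|| \le ||G - F|| < \de < \ep/2$, contradicting the bound just obtained. Hence $F = 0$, which is the contrapositive of the assertion. The routine part is the matrix-coefficient bookkeeping, which is precisely the computation carried out in the paragraph preceding the statement; the one point that genuinely needs attention is the preliminary remark that the $P_u$ are well defined and contractive on $\sA(\sS, X)$, since the displayed argument is phrased for a general element of the completion.
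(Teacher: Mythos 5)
Your proof is correct and follows essentially the same route as the paper's own argument, which appears in the paragraph immediately preceding the statement of the proposition: approximate $F$ by $G \in \sA_0$, locate a nonzero matrix coefficient $(\pi_{x,\ga}(G)\xi_s,\xi_t)$ to force $\|P_{t-s}(G)\| > \ep/2$, and contradict $\|P_{t-s}(G)\| = \|P_{t-s}(G-F)\| \le \|G-F\| < \ep/2$. Your preliminary verification that each $P_u$ extends contractively to the completion (via the isometry of the gauge automorphisms) is a detail the paper leaves implicit, and it is a worthwhile addition.
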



\subsection{ORBIT REPRESENTATIONS}
Next we define another class of representations of $\sA_0,$ which we
call orbit representations. Fix $x \in X$ and let $\sS(x)$ denote
the orbit of $x,$ namely, $\sS(x) = \{ \si_s(x):\ s \in \sS\}.$

\begin{definition} \label{d:orbitcocycle} A function $\mu : \sS
\times \sS(x) \to \bbC$ is an \emph{orbit cocycle} if it satisfies
\begin{enumerate}
\item For each $t \in \sS$ any $y \in \sS(x)$
\[ \sum_{\si_t(y_j) = \si_t(y)} |\mu(t, y_j)|^2 \leq 1 \]
\item (cocycle condition) For each $s,\ t \in \sS,$ and $y \in
\sS(x)$
\[ \mu(s+t, y) = \mu(t, y)\, \mu(s, \si_t(y))  .\]
\end{enumerate}
We may also write $\mu_x$ to emphasize the dependence on the point
$x \in X.$
\end{definition}

We will define the orbit representations $\rho_{x, \mu} $ of the algebra $\sA_0$ on $\ell_2(\sS(x)).$

Let
$\xi_y$ be the function
\[ \xi_y(w) =
\begin{cases}
1, \text{ if } w = y \\
0, \text{ otherwise. }
\end{cases}\]

Define, for $f \in C(X),\ y \in \sS(x)$
\[ \rho_{x, \mu}(f)\xi_y = f(y)\xi_y \]
and
\[ \rho_{x, \mu}(S_t)\xi_y = \mu(t, y) \xi_{\si_t(y)} .\]
Let $\xi = \sum a_i \xi_{y_j}$ be a unit vector in $\ell_2(\sS(x))$
such that $\si_t(y_j) = \si_t(y)$ for all $j.$
\[ \rho_{x, \mu}(S_t)\xi = (\sum a_j \mu(t, y_j))\xi_{\si_t(y)} .\]
Hence
\begin{align*}
 ||\rho_{x, \mu}(S_t)\xi||^2 &= \left|\sum a_j \mu(t, y_j)\right|^2 \\
    &\leq \left(\sum |a_j|^2\right) \left(\sum |\mu(t, y_j)|^2\right).
\end{align*}
Since $\xi$ is a unit vector, $\sum |a_j|^2 = 1.$ Hence, if
$\rho_{x, \mu}(S_t)$ is to be contractive, we must have that $\sum
|\mu(t, y_j)|^2 \leq 1.$ On the other hand, let us note that this
condition is sufficient for $\rho_{x, \mu}(S_t)$ to be contractive.
Consider the dense set of vectors which are linear combinations of
vectors $\xi$ of the above form. Say $\eta = \sum b_k \xi_k,$ where
for each $k, \ \rho_{x, \mu}(S_t)\xi_k $ is a multiple of
$\xi_{u_k}$ for some $u_k \in \sS,$ where the $u_k$ are distinct
elements of $\sS,$ the $\xi_k$ are unit vectors, and $\sum |b_k|^2 =
1.$  Then by the above we have that
\begin{align*}
||\rho_{x, \mu}(S_t)\eta||^2 &= \left|\left|\sum \rho_{x,
\mu}(S_t)b_k \xi_{k}\right|\right|^2
\\
    &\leq \left|\left|\sum b_j \xi_{u_k}\right|\right|^2 \\
    &\leq 1
\end{align*}

Additionally we have, for $s,\ t \in \sS$ and $ y \in \sS(x)$
\begin{align*}
\rho_{x, \mu}(S_{s+t})\xi_y &= \rho_{x, \mu}(S_s S_t)\xi_y \\
    &= \rho_{x, \mu}(S_s) \rho_{x, \mu} (S_t)\xi_y \\
    &= \rho_{x, \mu}(S_s) \mu(t, y) \xi_{\si_t(y)} \\
    &= \mu(t, y) \mu(s, \si_t(y)) \xi_{\si_{s+t}(y)} \\
    &= \mu(s+t, y) \xi_{\si_{s+t}(y)}
\end{align*}

To conclude that $\rho_{x, \mu}$ is a representation, we need
$\rho_{x, \mu} (f S_s) = \rho_{x, \mu}(S_s f\circ \si_s),\ s \in
\sS,\ f \in C(X).$ But that is a routine calculation.

We summarize this as
\begin{corollary} \label{c:orbitcocycle}
Orbit representations are contractive covariant representations.
Furthermore, there is a one-to-one correspondence between orbit
representations and orbit cocycles.
\end{corollary}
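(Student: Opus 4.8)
The plan is to read the Corollary as two separate assertions and handle them in turn. The first --- that every orbit representation is a contractive covariant representation --- is already established by the computations immediately preceding the statement: given an orbit cocycle $\mu$, those computations show $\rho_{x,\mu}(S_t)$ is a contraction (using condition (1)), that $\rho_{x,\mu}(S_{s+t}) = \rho_{x,\mu}(S_s)\rho_{x,\mu}(S_t)$ (using the cocycle condition (2)), and that the covariance relation $\rho_{x,\mu}(fS_s) = \rho_{x,\mu}(S_s\, f\circ\si_s)$ holds; $\rho_{x,\mu}|_{C(X)}$ is visibly a $*$-representation since $C(X)$ acts diagonally by point evaluations. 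So all that is left for this half is to observe $\rho_{x,\mu}(S_0) = I$: condition (2) with $s=t=0$ gives $\mu(0,y)\in\{0,1\}$, and $\mu(0,y)=0$ would force $\mu(s,y)=0$ for every $s$ (again by (2) with $t=0$), which is incompatible with $S_0$ acting as the identity; hence $\mu(0,y)=1$ and $\rho_{x,\mu}(S_0)\xi_y=\xi_y$. Thus conditions (1)--(3) of Definition~\ref{d:rep} hold in the contractive form.

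For the bijection I would first fix the point $x$ and take an orbit representation to mean a contractive covariant representation $\rho$ of $\sA_0$ on $\ell_2(\sS(x))$ for which $\rho(f)\xi_y = f(y)\xi_y$ for all $f\in C(X)$ and $y\in\sS(x)$ --- the intrinsic description of what was constructed above. Injectivity of $\mu\mapsto\rho_{x,\mu}$ is immediate: from $\rho_{x,\mu}(S_t)\xi_y = \mu(t,y)\xi_{\si_t(y)}$ and orthonormality of $\{\xi_y\}$ one recovers $\mu(t,y) = \langle\rho_{x,\mu}(S_t)\xi_y,\xi_{\si_t(y)}\rangle$, so $\mu$ is determined by $\rho_{x,\mu}$.

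For surjectivity I would start from an arbitrary orbit representation $\rho$ and apply the covariance relation $\rho(f)\rho(S_t) = \rho(S_t)\rho(f\circ\si_t)$ to $\xi_y$, obtaining $\rho(f)\bigl(\rho(S_t)\xi_y\bigr) = f(\si_t(y))\,\rho(S_t)\xi_y$ for every $f\in C(X)$. Expanding $\rho(S_t)\xi_y=\sum_{z\in\sS(x)}c_z\xi_z$ and using that $\rho(f)$ is diagonal, any $z$ with $c_z\neq 0$ satisfies $f(z)=f(\si_t(y))$ for all $f\in C(X)$; since $C(X)$ separates the points of $X$, this forces $z=\si_t(y)$, so $\rho(S_t)\xi_y = \mu(t,y)\xi_{\si_t(y)}$ for some scalar $\mu(t,y)$ and hence $\rho=\rho_{x,\mu}$. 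It then remains to check that $\mu$ is an orbit cocycle: restricting the contraction $\rho(S_t)$ to $\operatorname{span}\{\xi_{y_j}:\si_t(y_j)=\si_t(y)\}$ and testing on the unit vector with coordinates proportional to $\overline{\mu(t,y_j)}$ yields condition (1), and comparing the actions of $\rho(S_{s+t})$ and $\rho(S_s)\rho(S_t)$ on $\xi_y$ yields the cocycle identity (2).

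The step I expect to be the crux is the eigenvector argument identifying $\rho(S_t)\xi_y$ as a scalar multiple of $\xi_{\si_t(y)}$: this is the only place where the defining structure of an orbit representation (diagonal $C(X)$-action together with covariance) is genuinely used, and it rests on $C(X)$ separating points of $X$. Everything else is bookkeeping. The one other point worth being careful about is to state explicitly, before proving the correspondence, the intrinsic definition of ``orbit representation'' for which the bijection holds, since the surrounding text defines only the representations $\rho_{x,\mu}$ by formula.
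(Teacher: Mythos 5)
Your proposal is correct, and the first half coincides with the paper's argument: the paper's ``proof'' of the corollary is exactly the block of computations preceding it (contractivity of $\rho_{x,\mu}(S_t)$ is equivalent to condition (1) of Definition~\ref{d:orbitcocycle}, the semigroup property follows from the cocycle identity, and the covariance relation is routine), after which the correspondence is simply ``summarized.'' Where you genuinely diverge is in the second half. The paper treats ``orbit representation'' as meaning one of the representations $\rho_{x,\mu}$ it has just constructed, so the bijection reduces to injectivity of $\mu\mapsto\rho_{x,\mu}$ (read off from the matrix coefficient $\langle\rho_{x,\mu}(S_t)\xi_y,\xi_{\si_t(y)}\rangle$) together with the already-proved equivalence of the cocycle axioms with well-definedness of the representation. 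You instead posit an intrinsic definition --- a contractive covariant representation on $\ell_2(\sS(x))$ in which $C(X)$ acts diagonally by evaluation --- and prove genuine surjectivity via the eigenvector argument: covariance forces $\rho(S_t)\xi_y$ to be a joint eigenvector of $\rho(C(X))$ with eigenvalues $f(\si_t(y))$, and since $C(X)$ separates points this pins $\rho(S_t)\xi_y$ to the line through $\xi_{\si_t(y)}$. That argument is correct and buys an honest characterization that the paper only implies; it also makes the statement of the corollary non-tautological. Your observation about $\mu(0,y)$ is likewise a real point the paper glosses over: the cocycle identity only forces $\mu(0,y)\in\{0,1\}$, and the value $0$ produces a degenerate, non-unital $\rho_{x,\mu}$, so the clean bijection is with cocycles normalized by $\mu(0,\cdot)\equiv 1$ (equivalently, one should exclude the degenerate cocycles). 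None of this is a gap in your argument --- it is a refinement of the paper's.
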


\begin{remark} \label{r:orbitcocycle}

Let $\mu$ be an orbit cocycle, and $\ga \in \Ga$. Then $\ga \mu$ is
also an orbit cocycle.  That is, $\ga \mu(t, y) = \langle\ga,
t\rangle\mu(t, y).$
\end{remark}

To address the question of what can be said about the existence of
orbit cocycles we need a definition from \cite{Peters:semigroup}.
\begin{definition} \label{d:cocycle} A \emph{cocycle} for a
dynamical system $(X, \si, \sS) $ is a map $\om: \sS \times X \to
\bbR$ such that
\begin{enumerate}
\item $\om(s, x) \geq 0 $ for all $s \in \sS,\ x \in X;$
\item For each $y \in X, \ t \in \sS,\ \sum_{\si_t(x) = y} \om(t, x)
= 1;$
\item For each $t \in \sS,$ the map $ x \to \om(t, x)$ is
continuous;
\item For each $s,\ t \in \sS$ and $x \in X,\ \om$ satisfies the
cocycle identity
\[ \om(s+t, x) = \om(s, x)\, \om(t, \si_s(x)) .\]
\end{enumerate}
\end{definition}

If the dynamical system $(X, \si, \sS)$ admits a cocycle, then given
$ x \in X$ one can define an orbit cocycle $\mu_x$ by letting
$\mu_x(t, y) = \sqrt{\om(t, \si_t(y))},$ for $t \in \sS$ and $y$ in
the orbit of $x$.
\begin{example} \label{e:localhomeo}
\cite{Peters:semigroup} considers abelian semigroup actions on a
compact metric space by continuous, surjective, locallly injective
maps. Proposition 2 gives necessary and sufficient conditions for a
$\bbZ^k_+$ actions to admit a cocycle, and Example 5 of
\cite{Peters:semigroup} is an action of the non-negative dyadic
rationals on a compact metric space by local homeomorphisms which
admits a cocycle.
\end{example}

\subsection{LEFT REGULAR ORBIT REPRESENTATIONS}
We would like to establish the existence of a class of orbit cocycles which we will call \emph{left regular orbit cocycles}.
To do this, we need to impose a restriction on the dynamical system $(X, \si, \sS).$ If a point $x \in X$ has the property that
for each $y$ in the orbit of $x,\ \{ t \in \sS:\ \si_t(x) = y\}$ is finite, we will say that $x$ has the \emph{finite stability property}.
In case $\sS$ is a group $\sG$, this is equivalent to saying that the stability subroup $\sG_x$ is finite.  However, in our case,
card$\{t \in \sS:\ \si_t(x) = y\}$ could depend on the point $y$, and indeed, these cardinalities need not be bounded.

\begin{example} \label{e:finitestability}
Let $\bbZ^+ = \bbN \cup \{0\}$ and $\sS = \{ (m, n) \in (\bbZ^+)^2: \ 0 \leq m \leq n \}.$  $\sS$ is an abelian semigroup under
coordinatewise addition.  Let $X_1$ be a copy of $-\bbN \cup \{0\},$ and $X_2$ a copy of the integers, both with the discrete topology. These copies are chosen so that $X_1, \ X_2$ are disjoint.
Let $X = X_1 \cup X_2 \cup \{x_{\infty} \}$, with the one point compactification topology.
Then $X$ is metrizable.

Let $\sS$ act on $X$ as follows: $x_{\infty}$ will be fixed under all $\si_{m, n}.$  For $x \in X_1,$ define
\[ \si_{m, n}(x) =
\begin{cases}
x + m, \text{ if } x + m \leq 0 \\
0 \text{ otherwise}
\end{cases} \]
and for $x \in X_2,$ let $\si_{m, n}(x) = x + n.$  Then each map $\si_{m, n}$ is surjective, and continuous in the one point compactification topology.
None is a homeomorphism, except for $\si_{0, 0},$ the identity.

Now if $x \in X_2,$ then $x$ has the finite stability property. Indeed, let $y$ be in the orbit of $x$, so $y = x + k$ for some non-negative integer $k.$
Then $\si_t(x) = y$ iff $t = (m, k)$ for some $m,\ 0 \leq m \leq k.$ This is finite for each such $y$, but there is no upper bound on the cardinalities.
\end{example}

Let us see how the two classes of representations $\pi_{x, \ga}$ and
$\rho_{x, \mu}$ are related by constructing a special cocycle $\mu.$

 Fix a point $x \in X$ which has the finite stability property . Define an
equivalence relation $\sim$ on the semigroup $\sS$ by $s \sim t$ if
$\si_s(x) = \si_t(x),$ and let $[s]$ denote an equivalence class.
Define a map $q: \sS \to \sS(x)$ by $q(s) = \si_s(x).$ Then $q$ is a
one-to-one surjective map of the set of equivalence classes
$\sS/\sim$ to the orbit $\sS(x).$

Let $\sH_0^0$ be the subspace of all (finite) linear combinations
$\eta = \sum a_s \xi_s$ for which $ \sum a_s \xi_{q(s)} = 0.$ Note
that any such sum is the sum of elements $\sum a_s \xi_{q(s)}$ for
which the $s$ appearing in the sum belong to the same equivalence
class, and the sum of the coefficients $\sum a_s = 0.$

For $y $ in the orbit of $x$, let $\sH(y) = \text{span}\{ \xi_t :\ \si_t(x) = y\}$ By the finite stability property of $x$, this subspace is finite dimensional, hence equal to
its closure in $\ell_2(\sS).$  Then $\ell_2(\sS)  =\{ \oplus_{y \in \sS(x)} \sH(y) \}^{-} .$

Let $\sH_0(y) = \sH_0^0 \cap \sH(y).$ This is precisely the codimension one subspace consisting of all  linear combinations $\sum a_t \xi_t$
where $\xi_t \in \sH(y)$ and $\sum a_t = 0.$  Since the subspace is finite dimensional, it is closed in $ \sH(y).$

\begin{lemma} \label{l:invsub} The linear space $\sH_0^0$ is
invariant under the maps $\pi_{x, \ga}(f), \ f \in C(X)$, and under
$\pi_{x, \ga}(S_t), \ t \in \sS.$ Hence the closure of $\sH_0^0,$
which we denote by $\sH_0$, is invariant under $\pi_{x, \ga}(F),$
for $F \in \sA_0,\ \ga \in \Ga.$
\end{lemma}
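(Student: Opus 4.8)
The plan is to check invariance on a convenient spanning set of $\sH_0^0$ and then extend to the closure by continuity. First I would observe, as the paragraph before the lemma already notes, that $\sH_0^0$ is spanned by vectors of the form $\eta = \sum_{s\in[s_0]} a_s\,\xi_s$ where all the indices $s$ lie in a single $\sim$-equivalence class and $\sum_s a_s = 0$; call these \emph{elementary null vectors} supported on $[s_0]$. Indeed, if $\sum_s a_s\xi_{q(s)} = 0$, then grouping the $s$'s by equivalence class and using that the $\xi_y$, $y\in\sS(x)$, are linearly independent forces $\sum_{s\in[s_0]} a_s = 0$ for each class $[s_0]$ occurring; so $\eta$ is a sum of elementary null vectors. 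It therefore suffices to show that $\pi_{x,\ga}(f)$ and $\pi_{x,\ga}(S_t)$ map each elementary null vector into $\sH_0^0$.

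Next I would do the two computations. For $f\in C(X)$: if $s\in[s_0]$, then $\si_s(x) = \si_{s_0}(x) =: y_0$, so $\pi_{x,\ga}(f)\xi_s = f(\si_s(x))\,\xi_s = f(y_0)\,\xi_s$ with the \emph{same} scalar $f(y_0)$ for every $s$ in the class. Hence $\pi_{x,\ga}(f)\eta = f(y_0)\sum_{s\in[s_0]} a_s\,\xi_s$, which is again an elementary null vector supported on $[s_0]$ (its coefficients sum to $f(y_0)\cdot 0 = 0$). For $S_t$: $\pi_{x,\ga}(S_t)\xi_s = \langle\ga,t\rangle\,\xi_{t+s}$, and the map $s\mapsto t+s$ carries $[s_0]$ into the single class $[t+s_0]$, because $\si_s(x)=\si_{s_0}(x)$ implies $\si_{t+s}(x) = \si_t(\si_s(x)) = \si_t(\si_{s_0}(x)) = \si_{t+s_0}(x)$ (using that $\sS$ is abelian so $\si_{t+s}=\si_t\circ\si_s$). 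Moreover, by cancellation in $\sS$ the map $s\mapsto t+s$ is injective, so distinct basis vectors go to distinct basis vectors. Thus $\pi_{x,\ga}(S_t)\eta = \langle\ga,t\rangle\sum_{s\in[s_0]} a_s\,\xi_{t+s}$, an elementary null vector supported on $[t+s_0]$ (coefficients still sum to zero). This establishes invariance of $\sH_0^0$ under the generators.

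Finally, since $\sA_0$ is generated as an algebra by $C(X)$ and the $S_t$, any $F\in\sA_0$ is a finite sum of products of these generators, so $\pi_{x,\ga}(F)$ preserves $\sH_0^0$; and as $\pi_{x,\ga}(F)$ is a bounded operator, it also preserves the closure $\sH_0$. I do not anticipate a serious obstacle here: the only points requiring care are the reduction to elementary null vectors (which needs linear independence of the $\xi_y$) and the verification that $s\mapsto t+s$ both respects equivalence classes and is injective — the first uses commutativity of $\sS$, the second uses cancellation, both of which are part of the standing hypotheses. Everything else is the routine bookkeeping of tracking that a vanishing coefficient-sum is preserved under scaling and reindexing.
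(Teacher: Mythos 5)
Your proposal is correct and follows essentially the same route as the paper: reduce to vectors supported on a single equivalence class with coefficient sum zero, check that $\pi_{x,\ga}(f)$ acts by a single scalar on such a vector and that $\pi_{x,\ga}(S_t)$ translates the class $[s_0]$ into $[t+s_0]$, then pass to the closure by boundedness. The extra remarks on injectivity of $s\mapsto t+s$ and the linear-independence justification of the spanning-set reduction are harmless refinements of what the paper leaves implicit.
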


\begin{proof} Let $f \in C(X)$ and $\eta \in \sH_0^0$. It is enough to show that the subspace
$\sH_0(y),\ y \in \sS(x) $ is mapped to itself under $\pi_{x, \ga}(f).$ So we may assume $\eta \in \sH_0(y),$
 $ \eta = \sum
a_j \xi_{s_j}$ where  $\sum a_j = 0.$ Then
\begin{align*}
\pi_{x, \ga}(f)\eta &= \sum f(\si_{s_j(x)}) a_j \xi_{s_j} \\
    &= \sum f(y) a_j \xi_{s_j} \\
    &\in \sH_0(y)
\end{align*}

Now $\pi_{x, \ga}(S_t)$ does not map the subspace $\sH_0(y)$ to itself, but maps $\sH_0(y) $ to some $\sH_0(y').$
For $t \in \sS,$
\begin{align*}
\pi_{x, \ga}(S_t)\eta &= \sum \langle\ga, t\rangle a_j \xi_{t+s_j} \\
    &\in \sH_0^0
\end{align*}
because if $s_j$ belong to the same equivalence class, then so do
the elements $s_j + t$, since
\[ \si_{s_j + t}(x) = \si_t\circ\si_{s_j}(x) = \si_t(\si_s(x)) \]
where $[s_j] = [s]$ for all $j$.
\end{proof}

\begin{lemma} \label{l:orbitsubspace}
$\sH_0 \cap \sH(y) =  \sH_0(y) .$
\end{lemma}

\begin{proof}
We can write $\sH_0^0$ as the algebraic direct sum of the finite dimensional, orthogonal subspaces $\sH_0(y)$ as $y$ runs through $\sS(x).$ The closure
of $\sH_0^0$,  $\sH_0,$ is thus the $\ell_2$ direct sum of these orthogonal subspaces.
\end{proof}

\begin{remark} If $\sH(y)$ were not finite dimensional, then  $\sH_0(y)$ is dense in $\sH(y).$ In that case, $\sH_0 \supset \sH(y)$ and the
conclusion of the Lemma fails.
\end{remark}

Let $Q$ denote the orthogonal projection of $\ell_2(\sS)$ onto the
subspace $\sH_0.$  Let $\sH_1 = Q^{\perp}(\ell_2(\sS)).$ Observe that for any basis vector $\xi_t \in \ell_2(\sS),\ Q^{\perp}\xi_t \neq 0.$ Indeed, if $Q^{\perp}\xi_t = 0,$
then $\xi_t \in \sH_0$ and in fact $\xi_t \in \sH_0(y)$ where $ y = \si_t(x).$ But if $\sH_0(y)$ contained one basis vector, it would then contain all basis vectors in $\sH(y)$ and
hence $\sH_0(y)$ would coincide with $\sH(y)$,  which is not the case.

Since $\sH_0$ is invariant, we can define the representation
$\pi_{x, \ga}^0$ to be the restriction of $\pi_{x, \ga}$ to the
subspace $\sH_0.$ The subspace $\sH_1$ need not be invariant, but we
can define the representation $\pi_{x, \ga}^1$ by
\[  \pi_{x, \ga}^1(F) = Q^{\perp} \pi_{x, \ga}(F)|\sH_1 .\]
Note that $Q^{\perp}\xi_s \neq 0$ for all $s \in \sS.$

For simplicity of notation, if $\ga = 1$ is the trivial character,
write $\pi_{x, 1} = \pi_x,\ \pi_{x, 1}^1 = \pi_x^1.$

\begin{definition} \label{d:regorbitcocycle}
Fix $x \in X$ which has the finite stability property.
Define an orbit cocycle $\mu$ by setting, for $y \in \sS(x),\ t \in
\sS,$
\[ \mu(t, y) =
\frac{||\pi_x^1(S_t)Q^{\perp}\xi_u||}{||Q^{\perp}\xi_u||} =
\frac{||Q^{\perp}\xi_{t+u}||}{||Q^{\perp}\xi_u||}\] if $y =
\si_u(x).$ This is well-defined, for if $y = \si_{u'}(x),$ then
$Q^{\perp}\xi_u = Q^{\perp}\xi_{u'}.$ We call $\mu$ the \emph{left
regular orbit cocycle.}
\end{definition}

\begin{lemma} \label{l:regorbitcocycle} $\mu$ satisfies the two
conditions of Definition~\ref{d:orbitcocycle}, and hence is an orbit
cocycle. Furthermore, $\mu(s, y) \neq 0$ for all $s \in \sS$ and $y
\in \sS(x).$
\end{lemma}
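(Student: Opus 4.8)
The plan is to verify the two conditions of Definition~\ref{d:orbitcocycle} directly from the formula $\mu(t,y)=\|Q^{\perp}\xi_{t+u}\|/\|Q^{\perp}\xi_u\|$ when $y=\si_u(x)$, together with the observation (already noted after Lemma~\ref{l:invsub}) that $Q^{\perp}\xi_s\neq 0$ for every $s\in\sS$. The nonvanishing claim $\mu(s,y)\neq 0$ is then immediate: both numerator and denominator are strictly positive norms. The cocycle identity (condition (2)) is also essentially a telescoping computation: if $y=\si_u(x)$ then $\si_t(y)=\si_{t+u}(x)$, so
\[
\mu(t,y)\,\mu(s,\si_t(y))
=\frac{\|Q^{\perp}\xi_{t+u}\|}{\|Q^{\perp}\xi_u\|}\cdot\frac{\|Q^{\perp}\xi_{s+t+u}\|}{\|Q^{\perp}\xi_{t+u}\|}
=\frac{\|Q^{\perp}\xi_{s+t+u}\|}{\|Q^{\perp}\xi_u\|}
=\mu(s+t,y),
\]
using commutativity of $\sS$ to identify $s+t+u$. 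One must also note well-definedness is not an issue here since it was already established in Definition~\ref{d:regorbitcocycle}.

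The real content is condition (1): for each $t\in\sS$ and each $y\in\sS(x)$, writing $y=\si_u(x)$ and letting $y_1,\dots,y_m$ be the distinct points of $\sS(x)$ with $\si_t(y_j)=\si_t(y)$, we must show $\sum_j \mu(t,y_j)^2\le 1$. Choose representatives $u_j\in\sS$ with $\si_{u_j}(x)=y_j$; then the elements $t+u_j$ all satisfy $\si_{t+u_j}(x)=\si_t(y)$, i.e.\ they lie in a single $\sim$-equivalence class $[w]$ where $w$ is any element with $\si_w(x)=\si_t(y)$. The key point is that the vectors $Q^{\perp}\xi_{t+u_1},\dots,Q^{\perp}\xi_{t+u_m}$ are \emph{mutually orthogonal}: indeed $Q^{\perp}$ is the projection onto $\sH_1=\sH_0^{\perp}$, and on the finite-dimensional space spanned by $\{\xi_s : \si_s(x)=\si_t(y)\}$ the subspace $\sH_0$ cuts out exactly the "sum-zero" hyperplane, so $\sH_1$ meets this span in the one-dimensional line of the constant vector $\sum_{\si_s(x)=\si_t(y)}\xi_s$; hence each $Q^{\perp}\xi_{t+u_j}$ is a scalar multiple of that single unit direction, and distinct indices correspond to orthogonal contributions. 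Wait—this needs care, since being scalar multiples of the \emph{same} vector makes them parallel, not orthogonal; the correct statement is that $Q^{\perp}\xi_{t+u_j}=c\cdot v$ with $c=1/\sqrt{N}$ where $N=\#\{s:\si_s(x)=\si_t(y)\}$ and $v$ the normalized constant vector, so all $\|Q^{\perp}\xi_{t+u_j}\|$ are \emph{equal}, and one bounds $\sum_j\mu(t,y_j)^2 = m\cdot\|Q^{\perp}\xi_{t+w'}\|^2/\|Q^{\perp}\xi_{u_j}\|^2$ after noting $\|Q^{\perp}\xi_{u_j}\|$ is likewise constant across $j$ (all $u_j$ lie in distinct classes but each class sits inside the span of $\xi$'s over $\{s:\si_s(x)=y_j\}$). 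The cleanest route is: $\sum_j \mu(t,y_j)^2 = \sum_j \|\pi_x^1(S_t)Q^{\perp}\xi_{u_j}\|^2/\|Q^{\perp}\xi_{u_j}\|^2$, and since the $Q^{\perp}\xi_{u_j}$ are mutually orthogonal (distinct classes $[u_j]$), the sum $\sum_j Q^{\perp}\xi_{u_j}/\|Q^{\perp}\xi_{u_j}\|$ is a unit vector $\eta$ with $\rho_{x,\mu}(S_t)\eta$-type estimate giving $\|\pi_x^1(S_t)\eta\|^2=\sum_j \mu(t,y_j)^2$; then contractivity of $\pi_x^1(S_t)$ (which holds because $\pi_x(S_t)$ is an isometry and $\pi_x^1(S_t)=Q^{\perp}\pi_x(S_t)|\sH_1$ is a compression) forces this to be $\le 1$.

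The step I expect to be the main obstacle is making the orthogonality/counting argument in condition (1) precise: one must correctly identify, for a fixed target point $z=\si_t(y)\in\sS(x)$, the structure of $Q^{\perp}$ restricted to $\mathrm{span}\{\xi_s : \si_s(x)=z\}$ and confirm that the preimages under $\sigma_t$ partition the relevant index set so that the vectors $Q^{\perp}\xi_{u_j}$ (one per source point $y_j$) are genuinely orthogonal — this rests on the fact that $\sH_0^0$ is the span of sum-zero combinations \emph{within} each equivalence class, so two different classes contribute orthogonal pieces of $\sH_1$. Once that is in place, condition (1) reduces to the contractivity of the compression $\pi_x^1(S_t)$, which is automatic, and the proof is complete.
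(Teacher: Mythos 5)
Your verification of the cocycle identity and of the nonvanishing of $\mu$ matches the paper's argument and is fine, as is your observation that the vectors $Q^{\perp}\xi_{u_j}$ attached to distinct equivalence classes are mutually orthogonal. The gap is in condition (1). Your ``cleanest route'' applies contractivity of $\pi_x^1(S_t)$ to the single vector $\eta=\sum_j Q^{\perp}\xi_{u_j}/\|Q^{\perp}\xi_{u_j}\|$; but this vector has norm $\sqrt{m}$, not $1$, and -- more seriously -- the claimed identity $\|\pi_x^1(S_t)\eta\|^2=\sum_j\mu(t,y_j)^2$ is false. Since all the elements $t+u_j$ lie in a single equivalence class, the images $\pi_x^1(S_t)U_j$ are all positive multiples of the \emph{same} vector $Q^{\perp}\xi_{t+u}$ (you noticed this parallelism yourself in your first attempt and then lost track of it), so for $\eta=\sum_j a_jU_j$ one gets $\|\pi_x^1(S_t)\eta\|=\bigl|\sum_j a_j\,\mu(t,y_j)\bigr|$, not the $\ell^2$-sum of the $|a_j\mu(t,y_j)|$. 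Applying contractivity to your particular choice $a_j\equiv 1$ (after normalizing) only yields $\bigl(\sum_j\mu(t,y_j)\bigr)^2\le m$, which does not imply $\sum_j\mu(t,y_j)^2\le 1$: take $m=2$, $\mu(t,y_1)=\sqrt2$, $\mu(t,y_2)=0$.

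The missing idea -- and it is exactly what the paper's proof does -- is to let the coefficients vary. Contractivity gives $\bigl|\sum_j a_j\,\mu(t,y_j)\bigr|\le1$ for \emph{every} choice of $(a_j)$ with $\sum_j|a_j|^2=1$, and taking the supremum over all such $(a_j)$ (equivalently, choosing $a_j$ proportional to $\mu(t,y_j)$, which is the equality case of Cauchy--Schwarz) yields $\bigl(\sum_j\mu(t,y_j)^2\bigr)^{1/2}\le1$. A secondary error: your parenthetical claim that $\|Q^{\perp}\xi_{u_j}\|$ is constant across $j$ is unjustified, since this norm equals $1/\sqrt{N_j}$ with $N_j$ the cardinality of the class $[u_j]$, and distinct classes need not have the same size; fortunately the corrected argument does not use this.
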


\begin{proof} Let $s,\ t \in \sS$ and $y \in \sS(x),\ y = \si_u(x).$
Then
\begin{align*}
\mu(t, y)\, \mu(s, \si_t(y)) &=
\frac{||Q^{\perp}\xi_{t+u}||}{||Q^{\perp}\xi_u||}\,
\frac{||Q^{\perp}\xi_{s+t+u}||}{||Q^{\perp}\xi_{t+u}||}
\\
    &= \frac{||Q^{\perp}\xi_{s+t+u}||}{||Q^{\perp}\xi_{u}||} \\
    &= \mu(s+t, y),
\end{align*} verifying the cocycle identity.

Suppose $u_j,\ j = 1, \dots, n$ are elements of $\sS$ such that if
$y_j = \si_{u_j}(x) \in \sS(x)$ are distinct, and $\si_t(y_j) =
\si_t(y), \ 1 \leq j \leq n,$ where $y = y_1 = \si_u(x)$ and $ u =
u_1.$

The vectors $U_j = \frac{1}{||Q^{\perp}\xi_{u_j}||}\,
Q^{\perp}\xi_{u_j}$ are mutually orthogonal unit vectors, and $\xi =
\sum a_j U_j$ is a unit vector if $a_j \in \bbC$ satisfy
$\sum_{j=1}^n |a_j|^2 = 1.$ Now
\begin{align*}
\pi_x^1(S_t) \xi &= \left(\sum
\frac{a_j}{||Q^{\perp}\xi_{u_j}||}\right)Q^{\perp}\xi_{t+u} \\
    &= \left(\sum a_j\,
    \frac{||Q^{\perp}\xi_{t+u}||}{||Q^{\perp}\xi_{u_j}||}\right)\frac{1}{||Q^{\perp}\xi_{t+u}||}Q^{\perp}\xi_{t+u}
\end{align*}
Since $\pi_x^1(S_t)$ is contractive, $||\pi_x^1(S_t)\xi|| \leq 1 .$
Hence, the scalar $|\sum a_j\,
\frac{||Q^{\perp}\xi_{t+u}||}{||Q^{\perp}\xi_{u_j}||}\, | \leq 1,$
for all choices of $a_j$ such that $\sum_{j=1}^n |a_j|^2 = 1.$ By
Cauchy-Schwarz, this implies that
\[ \sum \left(\frac{||Q^{\perp}\xi_{t+u}||}{||Q^{\perp}\xi_{u_j}||}\right)^2
\leq 1 . \]  In other words,
\[ \sum_j \mu(t, y_j)^2 \leq 1 .\]

Finally, $\mu$ is never zero since $Q^{\perp}\xi_s \neq 0$ for all $
s \in \sS.$

\end{proof}

With $\mu$ the left regular orbit cocycle, define $W: \ell_2(\sS(x))
\to \sH_1$ as follows: if $y \in \sS(x),$ say $ y = \si_s(x),$ set
$W\xi_y = \frac{1}{||Q^{\perp}\xi_s||}\, Q^{\perp}\xi_s.$  Then $W$
maps an orthonormal basis of $\ell_2(\sS(x))$ onto an orthonormal
basis of $\sH^1.$ We compute
\begin{align*}
W^* \pi_{x, \ga}^1(S_t)W \xi_y &= W^*\pi_{x,
\ga}^1(S_t)\,\frac{1}{||Q^{\perp}\xi_s||} Q^{\perp}\xi_s \\
    &= \langle\ga, t\rangle\frac{1}{||Q^{\perp}\xi_s||}W^* Q^{\perp}\xi_{t+s} \\
    &= \langle\ga, t\rangle\frac{||Q^{\perp}\xi_{t+s}||}{||Q^{\perp}\xi_s||}
    W^* \frac{1}{||Q^{\perp}\xi_{t+s}||}\, \xi_{t+s} \\
    &= \rho_{x, \ga\mu}(S_t) \xi_{\si_t}(y)
\end{align*}
Also, a straightforward calculation shows that $W^*\pi_{x, \ga}^1(f)
W \xi_y = \rho_{x, \ga\mu}(f)\xi_y .$  This proves

\begin{corollary} \label{c:regorbitrep}
 $W^*\pi_{x, \ga}^1(F)W = \rho_{x, \ga\mu}(F),$
where $F \in \sA_0,\ \ga \in \Ga,$ and $\mu$ is the left regular
orbit cocycle. Thus,
\[ ||\rho_{x, \ga\mu}(F)|| \leq ||\pi_{x, \ga}(F)|| .\]
\end{corollary}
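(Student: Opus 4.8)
The plan is to verify directly that the map $W$ defined just before the statement is a well-defined partial isometry (indeed, that it carries an orthonormal basis to an orthonormal basis), and then to check the intertwining identity $W^*\pi_{x,\ga}^1(F)W = \rho_{x,\ga\mu}(F)$ on the two types of generators $f\in C(X)$ and $S_t$, $t\in\sS$, from which the general case follows by linearity and multiplicativity. Once the identity is established on $\sA_0$, the norm inequality is immediate: since $W$ is an isometry into $\sH_1$ and $\pi_{x,\ga}^1(F) = Q^\perp\pi_{x,\ga}(F)|_{\sH_1}$ with $Q^\perp$ a contraction, we get $\|\rho_{x,\ga\mu}(F)\| = \|W^*\pi_{x,\ga}^1(F)W\| \le \|\pi_{x,\ga}^1(F)\| \le \|\pi_{x,\ga}(F)\|$.

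First I would record that $W$ is well-defined: if $y=\si_s(x)=\si_{s'}(x)$ then $s\sim s'$, so $\xi_s-\xi_{s'}\in\sH_0^0\subseteq\sH_0$, whence $Q^\perp\xi_s = Q^\perp\xi_{s'}$; this is exactly the remark already made in Definition~\ref{d:regorbitcocycle}. Next, the vectors $Q^\perp\xi_s$ for $s$ ranging over a transversal of $\sim$ are mutually orthogonal (vectors indexed by different equivalence classes are already orthogonal in $\ell_2(\sS)$, and $Q^\perp$ does not destroy that since $\sH_0$ decomposes along equivalence classes), and each is nonzero since $Q^\perp\xi_s\neq 0$ as noted after the definition of $\pi_{x,\ga}^1$. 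Normalizing, $\{W\xi_y : y\in\sS(x)\}$ is an orthonormal set; it spans $\sH_1$ because $\sH_1 = Q^\perp(\ell_2(\sS))$ is spanned by the $Q^\perp\xi_s$. Hence $W$ is a unitary from $\ell_2(\sS(x))$ onto $\sH_1$, and in particular $W^*W = I$.

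The substance is the generator computation. For $f\in C(X)$ and $y=\si_s(x)$: $\pi_{x,\ga}^1(f)W\xi_y = Q^\perp\pi_{x,\ga}(f)\,\frac{1}{\|Q^\perp\xi_s\|}Q^\perp\xi_s$; since $\pi_{x,\ga}(f)\xi_s = f(\si_s(x))\xi_s = f(y)\xi_s$ and $\sH_1$ is $\pi_{x,\ga}(f)$-invariant modulo $\sH_0$ in the sense already used (or more simply, $\pi_{x,\ga}(f)$ commutes with evaluation so $Q^\perp\pi_{x,\ga}(f)Q^\perp\xi_s = f(y)Q^\perp\xi_s$), one gets $f(y)W\xi_y$, so $W^*\pi_{x,\ga}^1(f)W\xi_y = f(y)\xi_y = \rho_{x,\ga\mu}(f)\xi_y$. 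For $S_t$, the displayed four-line computation preceding the corollary does the job: $W^*\pi_{x,\ga}^1(S_t)W\xi_y$ telescopes through the norm ratios to $\langle\ga,t\rangle\,\mu(t,y)\,\xi_{\si_t(y)} = \rho_{x,\ga\mu}(S_t)\xi_y$, using $\mu$ the left regular orbit cocycle and Remark~\ref{r:orbitcocycle} to identify $\langle\ga,t\rangle\mu(t,y)$ as $(\ga\mu)(t,y)$. Since both $\pi_{x,\ga}^1$ and $\rho_{x,\ga\mu}$ are (algebra) representations of $\sA_0$ and $W^*(\cdot)W$ is multiplicative on the range because $WW^* = Q^\perp = $ the projection onto $\sH_1$ while $\pi_{x,\ga}^1$ is the compression of $\pi_{x,\ga}$ to $\sH_1$ — here one must be slightly careful: compressions are not multiplicative in general. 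The cleanest route, and the one I would take, is to observe that $\sH_1 = \sH_0^\perp$ where $\sH_0$ is $\pi_{x,\ga}$-invariant (Lemma~\ref{l:invsub}), so $\sH_1$ is invariant for $\pi_{x,\ga}(F)^*$; this makes $\pi_{x,\ga}^1$ genuinely a representation on $\sH_1$ only when we compress a $*$-invariant family, which we are not quite doing, so instead I would simply verify the identity on monomials $S_{s_1}f_1\cdots$ directly, or — simplest of all — note it suffices to check it on the generators $S_t$ and $f$ (done above) together with the fact that $W^*\pi_{x,\ga}^1(\cdot)W$ and $\rho_{x,\ga\mu}$ are both linear and the generators generate $\sA_0$ as an algebra, provided one first checks $W^*\pi^1_{x,\ga}(S_t)\pi^1_{x,\ga}(f)W = W^*\pi^1_{x,\ga}(S_t)WW^*\pi^1_{x,\ga}(f)W$, i.e. that $WW^* = Q^\perp$ acts as the identity between the two factors — which holds because $\pi^1_{x,\ga}(f)W\xi_y = f(y)W\xi_y$ lands back in $\sH_1$.

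I expect the main obstacle to be precisely this last point: pushing the generator identities up to a homomorphism identity on all of $\sA_0$ requires knowing that the compression to $\sH_1$ behaves multiplicatively on the relevant products, which is not automatic for an arbitrary compression. The resolution is that the only products that occur are of the form $S_s$ times (functions and further $S_t$'s), and $\pi^1_{x,\ga}(f)$ maps $\sH_1$ into $\sH_1$ (it acts diagonally in the $W\xi_y$ basis), so no "leakage" into $\sH_0$ occurs between consecutive factors in a monomial $\sum S_s f_s$; hence the compression is multiplicative on $\sA_0$ and the identity propagates. Everything else — well-definedness of $W$, orthonormality, and the two generator computations — is routine and already essentially written out in the text preceding the statement.
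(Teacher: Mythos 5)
Your argument is correct and follows essentially the same route as the paper: the displayed computations immediately preceding the corollary are exactly the generator verifications you describe (unitarity of $W$ onto $\sH_1$, the $S_t$ computation, and the ``straightforward calculation'' for $f$), and the paper likewise treats the extension from generators to all of $\sA_0$ as routine. Your concern about multiplicativity of the compression is reasonable but resolves more simply than your ``no leakage'' workaround suggests: since $\sH_0$ is invariant under $\pi_{x,\ga}(\sA_0)$ by Lemma~\ref{l:invsub}, the compression to $\sH_0^{\perp}=\sH_1$ is automatically an algebra homomorphism (the standard quotient-module fact, with no $*$-invariance required), so $\pi_{x,\ga}^1$ is genuinely a representation and the generator identities propagate at once.
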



\section{EXTENSIONS OF SEMIGROUP DYNAMICAL SYSTEMS}
\label{s:extension}

\begin{definition} \label{d:ext}
Given a dynamical system $(X, \si, \sS)$ we say that the dynamical
system $(Y, \be,\sS)$ is an extension of $(X, \si, \sS)$ if there is
a continuous surjection $p: Y \rightarrow X$ such that the diagram
\[ \begin{CD}
Y    @>\be_s>>    Y \\
@VpVV       @VpVV \\
X   @>\si_s>>     X
\end{CD} \]
commutes for every $s \in \sS$.  We call $p$ the extension map of
$(Y, \be, \sS)$ over $(X, \si, \sS)$.
\end{definition}

We say that an extension $(Y, \be, \sS)$ is a \emph{homeomorphism
extension} of $(X, \si, \sS)$ if the maps $\be_s$ are homeomorphisms
for all $s \in \sS$. We now provide a procedure for producing a
canonical homeomorphism extension of $(X, \si, \sS)$.

Let $\sG = \sS - \sS$ be the group generated by the abelian
semigroup $\sS.$ (Recall that $\sS$ is a semigroup with
cancellation.) Define a partial order on $\sG$ by $h < g$ if $ g - h
\in \sS.$ Let $X_g = X$ for all $g \in \sG.$  If $ g - h = u \in
\sS$ let $\si_u$ map $X_g \to X_h.$  Then the commutativity
conditions for an inverse system are satisfied, so the inverse limit
(or projective limit) of the inverse system exists.  Denote the
inverse limit by $\wti{X}.$

\begin{proposition} \label{p:inversesys}
$\wti{X} = \{ (x_g)_{g \in \sG} \in \Pi X_g :\ x_h = \si_u(x_g)$ for
all $ h < g \in G,$ with $ u = g - h $\}.
\end{proposition}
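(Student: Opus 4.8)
The plan is to recognize the asserted equality as the standard concrete realization of the projective (inverse) limit of an inverse system of topological spaces, and to carry out the few routine verifications that license that realization in the present setting.

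First I would confirm that $(\sG,<)$ is upward directed, which is what makes the inverse limit well behaved. Since $\sS$ is a cancellative abelian semigroup with identity $0$, every element of $\sG=\sS-\sS$ can be written as $a-b$ with $a,b\in\sS$. Given $g_1=a_1-b_1$ and $g_2=a_2-b_2$, the element $g:=a_1+a_2$ satisfies $g-g_1=a_2+b_1\in\sS$ and $g-g_2=a_1+b_2\in\sS$, so $g>g_1$ and $g>g_2$. I would note in passing that $<$ is in general only a preorder, since $\sS$ may contain the nontrivial subgroup $\sS\cap-\sS$; this is harmless, as directed preorders are perfectly adequate for the construction.

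Next I would check the inverse-system axioms for the data $(X_g)_{g\in\sG}$ with connecting maps $\si_u\colon X_g\to X_h$ for $h<g$, $u=g-h$. Reflexivity $g<g$ holds because $0\in\sS$, and $\si_0=\operatorname{id}_X$ since $\si$ is a semigroup homomorphism; for $k<h<g$, putting $u=g-h$ and $v=h-k$ one has $v+u=g-k\in\sS$ and, by the homomorphism property, $\si_v\circ\si_u=\si_{v+u}=\si_{g-k}$, which is exactly the compatibility the connecting maps must satisfy. Each $\si_u$ is continuous by hypothesis. With these facts in hand, $\{(X_g),(\si_u)\}$ is a genuine inverse system of compact metric spaces over the directed set $(\sG,<)$, and its limit is by definition the subspace of $\prod_{g\in\sG}X_g$ consisting of the tuples $(x_g)$ compatible with every connecting map, i.e.\ those with $x_h=\si_{g-h}(x_g)$ whenever $h<g$ --- precisely the displayed set. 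To finish I would observe that this set is closed in the product (each constraint $x_h=\si_{g-h}(x_g)$ is closed, as $X$ is Hausdorff and $\si_{g-h}$ continuous), hence compact by Tychonoff, and metrizable because $\sS$, and therefore $\sG$, is countable while each $X_g=X$ is metrizable; thus $\wti X$ is indeed a compact metric space, and (since the $\si_u$ are surjective and the $X_g$ nonempty compact) it is nonempty.

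I do not anticipate a real obstacle here: the only substantive point is the upward directedness of $<$, dispatched above, and the only thing requiring slight care is that $<$ is a preorder rather than a strict partial order, which does not affect the projective-limit construction.
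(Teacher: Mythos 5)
Your proof is correct. The paper itself gives no argument here: it simply cites \cite[Proposition 16-6.4]{Dauns} for the standard concrete realization of a projective limit as the set of compatible threads in the product. What you have done is supply, in place of the citation, exactly the verifications that license invoking such a result: that $(\sG,<)$ is directed (your computation $g=a_1+a_2$ with $g-g_i\in\sS$ is the right one, and it is precisely where cancellativity and the decomposition $\sG=\sS-\sS$ enter), that $\si_0=\operatorname{id}$ and $\si_v\circ\si_u=\si_{v+u}$ give the compatibility of the connecting maps $X_g\to X_h$, and that the limit is then by definition the displayed subset of $\prod_g X_g$. Your remark that $<$ is only a preorder when $\sS\cap-\sS$ is nontrivial is a point the paper glosses over, and it is correctly handled. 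The additional observations (the thread space is closed in the product, hence compact, metrizable by countability of $\sG$, and nonempty) go beyond the literal statement of the proposition but are facts the paper uses implicitly when it treats $\wti{X}$ as a compact metric space, so including them is a net gain; the only cosmetic quibble is that surjectivity of the $\si_u$ is not needed for nonemptiness of the limit of nonempty compact Hausdorff spaces over a directed set, though it is what makes the coordinate projections, and in particular $p:\wti{X}\to X$, surjective later on.
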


\begin{proof} This is \cite[Proposition 16-6.4]{Dauns}.
\end{proof}

We now show that there is a homeomorphism $\wti{\si}_t,$ for each $t
\in \sS .$
 Let $\wti{\si}_t$ be the map
$\wti{\si}_t((x_g)_{g\in \sG}) = (\si_t(x_s))_{s\in \sG},$ and let
$p: \wti{X} \to X$ be the map $p((x_s)_{s\in \sG}) = x_0$ (where $0$
is the identity of $\sG$.)

\begin{proposition} \label{p:homext} $(\wti{X}, \wti{\si}, \sS)$ is a
dynamical system for which the $\wti{\si}_t$ are homeomorphisms, for
all $t \in \sS.$ Furthermore, the diagram
\[ \begin{CD}
\wti{X}    @>\wti{\si}_t>>    \wti{X} \\
@VpVV       @VpVV \\
X   @>\si_t>>     X
\end{CD} \]
commutes, so that $(\wti{X}, \wti{\si}, \sS)$ is a homeomorphism
extension of $(X, \si, \sS).$
\end{proposition}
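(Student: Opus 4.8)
The plan is to verify the three assertions in turn: that each $\wti{\si}_t$ is a well-defined continuous self-map of $\wti{X}$, that it is in fact a homeomorphism, and that the square commutes. First I would check that $\wti{\si}_t$ maps $\wti{X}$ into itself. Given a point $(x_g)_{g\in\sG}\in\wti{X}$, the candidate image is $(\si_t(x_g))_{g\in\sG}$; to see this lies in $\wti{X}$ I use the characterization of Proposition~\ref{p:inversesys}, namely that for $h<g$ with $u=g-h\in\sS$ one needs $\si_t(x_h)=\si_u(\si_t(x_g))$. But $x_h=\si_u(x_g)$ by hypothesis, so $\si_t(x_h)=\si_t(\si_u(x_g))=\si_u(\si_t(x_g))$, the last equality being commutativity of the semigroup action (here $\si_t\circ\si_u=\si_{t+u}=\si_{u+t}=\si_u\circ\si_t$). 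Continuity of $\wti{\si}_t$ is immediate: $\wti{X}$ carries the subspace topology from the product $\prod_g X_g$, and $\wti{\si}_t$ is the restriction of the product map $\prod_g\si_t$, which is continuous coordinatewise.

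Next I would produce the inverse. The natural candidate for $\wti{\si}_t^{-1}$ is the coordinate shift: send $(x_g)_{g\in\sG}$ to $(y_g)_{g\in\sG}$ where $y_g=x_{g+t}$. One checks this is again a point of $\wti{X}$ (if $h<g$ then $h+t<g+t$ with the same difference $u$, and the compatibility relation for the $x$'s at $h+t<g+t$ is exactly the relation needed for the $y$'s at $h<g$), that it is continuous (again a restriction of a coordinate-permutation of the product, which is a homeomorphism of $\prod_g X_g$), and that it composes with $\wti{\si}_t$ to the identity in both orders. For $\wti{\si}_t$ followed by the shift: starting from $(x_g)$, applying $\wti{\si}_t$ gives $(\si_t(x_g))_g$, and shifting gives the $g$-coordinate $\si_t(x_{g+t})$; but $g<g+t$ with difference $t$, so $x_g=\si_t(x_{g+t})$ by the defining relation of $\wti X$, and we recover $(x_g)$. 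The other composition is handled the same way. Hence $\wti\si_t$ is a homeomorphism. I would also note in passing that $t\mapsto\wti\si_t$ is a semigroup homomorphism, so $(\wti X,\wti\si,\sS)$ is genuinely a dynamical system in the sense of the standing hypotheses — surjectivity is automatic since each $\wti\si_t$ is a bijection, and one should observe $\wti X$ is a nonempty compact metric space (nonempty because the $\si_u$ are surjective, so the inverse system has surjective bonding maps; compact as a closed subset of the compact product; metrizable because $\sG$ is countable, being generated by the countable $\sS$).

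Finally, commutativity of the square: $p\circ\wti\si_t$ sends $(x_g)_g$ to the $0$-coordinate of $(\si_t(x_g))_g$, namely $\si_t(x_0)$, while $\si_t\circ p$ sends $(x_g)_g$ to $\si_t(x_0)$ as well, so the two agree. This also re-confirms that $p$ is the extension map of Definition~\ref{d:ext}. I do not anticipate a genuine obstacle here; the only point requiring care is keeping straight the direction of the partial order and of the bonding maps $\si_u:X_g\to X_h$ for $h<g$, so that the shift $y_g=x_{g+t}$ (rather than $x_{g-t}$, which need not be defined since $g-t$ may not sit below $g$ in a usable way) is the correct inverse. Everything else is a routine unwinding of the projective-limit definition together with the commutativity of the original action.
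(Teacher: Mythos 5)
Your proposal is correct and follows essentially the same route as the paper: the key step in both is the coordinate shift $(x_g)_g \mapsto (x_{g+t})_g$, which the paper uses to prove surjectivity (and pairs with a direct injectivity argument via $\si_t(x_g)=x_{g-t}$), and which you package as an explicit two-sided inverse. The additional routine verifications you include (that $\wti{\si}_t$ and the shift land in $\wti{X}$, continuity, and commutativity of the square) are left implicit in the paper but are checked correctly.
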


\begin{proof}
We first see that $\wti{\si}_t$ is surjective.  Indeed, let
$((y_g)_{g \in \sG}) \in \wti{X},$ and set $x_g = y_{g+t}.$ Then
$(x_g)_{g\in \sG} \in \wti{X},$ and $\wti{\si}_t((x_g)_{g\in \sG}) =
(y_g)_{g\in \sG}.$

To show injectivity suppose \[ (x_g)_{g\in \sG},\ (x_g')_{g \in \sG}
\in \wti{X}\]  and \[ \wti{\si}_t((x_g)_{g\in \sG}) =
\wti{\si}_t((x_g')_{g\in \sG}). \] Then for all $g \in \sG,\ x_{g-t} =
x_{g-t}'.$  Hence $(x_g)_{g\in \sG} = (x_g'){g\in \sG}.$

\end{proof}

\begin{corollary} \label{c:homext} $\sG$ acts as a group of
homeomorphisms on $\wti{X}.$
\end{corollary}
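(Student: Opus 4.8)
The plan is to extend the just-proved semigroup action of $\sS$ on $\wti X$ by homeomorphisms to a genuine group action of $\sG = \sS - \sS$. Since every $g \in \sG$ can be written as $g = s - t$ with $s, t \in \sS$, the natural candidate is $\wti\si_g := \wti\si_s \circ (\wti\si_t)^{-1}$, where the inverses exist by Proposition~\ref{p:homext}. First I would check that this is well-defined: if $s - t = s' - t'$ then $s + t' = s' + t$ in $\sS$, so $\wti\si_s \wti\si_{t'} = \wti\si_{s+t'} = \wti\si_{s'+t} = \wti\si_{s'}\wti\si_t$ by the semigroup homomorphism property, and since the $\wti\si_u$ commute (they are built from the commuting maps $\si_u$) and are invertible, rearranging gives $\wti\si_s (\wti\si_t)^{-1} = \wti\si_{s'}(\wti\si_{t'})^{-1}$. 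This uses commutativity of $\sS$ in an essential way, so it is the step most worth stating carefully, though it is not really an obstacle.

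Next I would verify the homomorphism property $\wti\si_{g}\wti\si_{h} = \wti\si_{g+h}$ for $g, h \in \sG$: writing $g = s - t$, $h = s' - t'$, we have $g + h = (s + s') - (t + t')$, and the claim reduces to the identity $(\wti\si_s (\wti\si_t)^{-1})(\wti\si_{s'}(\wti\si_{t'})^{-1}) = \wti\si_{s+s'}(\wti\si_{t+t'})^{-1}$, which again follows from commutativity and invertibility of the $\wti\si_u$ together with the already-established fact that $u \mapsto \wti\si_u$ is a homomorphism on $\sS$. In particular $\wti\si_0 = \mathrm{id}$ and $\wti\si_{-g} = (\wti\si_g)^{-1}$, so each $\wti\si_g$ is a bijection; it is a homeomorphism because it is a composition of a continuous map and the inverse of a continuous map, and on a compact metric space a continuous bijection is automatically a homeomorphism (which also handles why $(\wti\si_t)^{-1}$ is continuous). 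Finally, I would note that this new action restricts on $\sS$ to the action of Proposition~\ref{p:homext}, so it is consistent with what was already constructed, and hence $\sG$ acts on $\wti X$ by homeomorphisms.

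There is no serious obstacle here; the corollary is essentially a formal consequence of Proposition~\ref{p:homext} plus the group-completion universal property of $\sG = \sS - \sS$. The only point requiring a moment's care is the well-definedness of $\wti\si_g$ in terms of the representation $g = s - t$, where one leans on both the cancellation property of $\sS$ (to pass between $s - t = s' - t'$ and $s + t' = s' + t$) and the commutativity of the family $\{\wti\si_u\}_{u \in \sS}$ of homeomorphisms. Everything else — the homomorphism identity, invertibility, continuity of inverses via compactness, and compatibility of the diagram with $p$ — is routine bookkeeping that I would present in a few lines.
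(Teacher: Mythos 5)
Your proposal is correct and follows the same route as the paper: define $\wti{\si}_g = \wti{\si}_s\circ\wti{\si}_t^{-1}$ for $g = s-t$ and check well-definedness via $s+t' = s'+t$. The paper's proof is in fact terser than yours, stopping after the well-definedness check, so your additional verification of the homomorphism identity and of continuity of the inverses is harmless extra detail rather than a different approach.
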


\begin{proof} Let $g \in \sG.$  Since $\sS - \sS = \sG,\ g$ can be
written as $s - t,$ for $s,\ t \in \sS.$ Define
\[ \wti{\si}_g = \wti{\si}_s \circ \wti{\si}_t^{-1} .\]
We show this is well defined. If also $g = s' - t',$ then $s + t' =
s' + t.$  Hence, $\wti{\si}_{s+t'} = \wti{\si}_{s'+t} .$ From this
we obtain $ \wti{\si}_s \circ \wti{\si}_t^{-1} = \wti{\si}_{s'}\circ
\wti{\si}_{t'}^{-1}.$

\end{proof}

Our next goal is to show that the extension $(\wti{X}, \wti{\si},
\sS)$ is a minimal extension of $(X, \si, \sS)$ in a sense we will
make precise.

\begin{lemma} If $\si_t$ is a homeomorphism for all $t \in \sS$, then the
map $p: \wti{X} \rightarrow X$ is a homeomorphism. Hence the
dynamical systems $(X, \si, \sS)$ and $(\wti{X}, \wti{\si}, \sS)$
are conjugate.
\end{lemma}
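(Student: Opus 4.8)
The plan is to show that $p$ is a continuous bijection from a compact space onto a Hausdorff space, hence a homeomorphism, and then to invoke Proposition~\ref{p:homext} to upgrade this to a conjugacy of dynamical systems.

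First I would record the purely topological facts, which hold with no extra hypothesis: since $\sS$, and hence $\sG = \sS - \sS$, is countable and each $X_g = X$ is compact metric, the product $\prod_{g \in \sG} X_g$ is compact metrizable; by Proposition~\ref{p:inversesys} the set $\wti{X}$ is cut out of this product by the closed conditions $x_h = \si_u(x_g)$, so $\wti{X}$ is a compact (metric) space; and $p$, being evaluation of a coordinate, is the restriction of a coordinate projection and so is continuous.

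Next, the hypothesis that each $\si_t$ is a homeomorphism is used to get injectivity of $p$. Given $(x_g)_{g \in \sG} \in \wti{X}$, I claim each coordinate is determined by $x_0$. Write an arbitrary $g \in \sG$ as $g = s - t$ with $s,\ t \in \sS$. Then $0 - (-t) = t \in \sS$ and $g - (-t) = s \in \sS$, so the defining relations of $\wti{X}$ give $\si_t(x_0) = x_{-t} = \si_s(x_g)$; since $\si_s$ is invertible this forces $x_g = \si_s^{-1}(\si_t(x_0))$. Hence $p$ is one-to-one. For surjectivity I would run this formula in reverse: given $x \in X$, set $x_g = \si_s^{-1}(\si_t(x))$ whenever $g = s - t$ with $s,\ t \in \sS$. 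This is well defined, for if $g = s - t = s' - t'$ then $s + t' = s' + t$, whence $\si_s \si_{t'} = \si_{s'} \si_t$, and since the $\si$'s commute and are invertible this rearranges to $\si_s^{-1}\si_t = \si_{s'}^{-1}\si_{t'}$. A short computation using $\si_u \si_t = \si_{u+t}$ and commutativity then shows $x_h = \si_u(x_g)$ whenever $g - h = u \in \sS$, so $(x_g)_{g \in \sG} \in \wti{X}$, and clearly $x_0 = x$. (Alternatively, surjectivity of $p$ is automatic from the general theory of inverse limits of surjective systems over compact spaces, but the explicit inverse is cleaner here.) Thus $p$ is a continuous bijection of the compact space $\wti{X}$ onto the Hausdorff space $X$, hence a homeomorphism.

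Finally, Proposition~\ref{p:homext} gives $p \circ \wti{\si}_t = \si_t \circ p$ for all $t \in \sS$, so the homeomorphism $p$ intertwines the two actions, which is exactly the statement that $(X, \si, \sS)$ and $(\wti{X}, \wti{\si}, \sS)$ are conjugate. The only step needing any care is the well-definedness of the coordinates $x_g$ in the surjectivity argument, and that collapses to the group relation $s + t' = s' + t$ together with commutativity of the $\si_s$; there is no genuine obstacle.
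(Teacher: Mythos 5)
Your proof is correct and follows essentially the same route as the paper: surjectivity and continuity of $p$ come from the extension structure, and injectivity follows because invertibility of the $\si_t$ forces every coordinate $x_g$ to be determined by $x_0$. You are somewhat more careful than the paper's own (rather terse) argument --- in particular you handle coordinates indexed by general $g \in \sG$ rather than just $s \in \sS$, and you verify well-definedness of the explicit inverse --- but the underlying idea is identical.
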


\begin{proof} We need only show that the map $p$ is injective, since
by Definition~\ref{d:ext} it is a continuous surjection.  So assume
that $p((x_s)_{s \in \sS}) = p((y_s)_{s \in \sS})$.  In particular
$x_0 = y_0$.  Now since $\si_t$ is a homeomorphism we have that $x_s
= \si_s^{-1}(x_0) = \si_s^{-1}(y_0) = y_s$ and hence $p$ is
injective. That the systems are conjugate follows from the
commutative diagram for the notion of extension.
\end{proof}

\begin{definition} \label{d:minext} Consider an extension $(Y, \be,
\sS)$ of $(X,\si, \sS)$ via an extension map $r$.  We say that an
extension $(Z, \vpi, \sS)$ of $(X, \si,\sS)$ lies between $(Y, \be,
\sS)$ and $(X, \si_s,\sS)$ if the following diagram
\[ \begin{CD}
Y    @>\be_t>>    Y \\
@VpVV       @VpVV \\
Z   @>\vpi_t>>     Z \\
@VqVV       @VqVV \\
X   @>\si_t>>     X
\end{CD} \]
commutes for all $t \in \sS,$ and $q \circ p = r,$ where $p$ and $q$
are the extension maps as in the diagram.

We say the extension  $(Y,\be, \sS)$ of $(X,\si, \sS)$ via an
extension map $r$ is a \emph{homeomorphism extension} if the maps
$\be_t,\ t \in \sS$ are homeomorphisms, for $ t \in \sS.$ Finally,
we call a homeomorphism extension $(Y,\be, \sS)$ of $(X,\si, \sS)$
\emph{minimal} if for any dynamical system $(Z, \vpi, \sS)$ that
lies between the two systems as in the diagram, the extension map
$p$ is a homeomorphism, and hence $(Y, \be, \sS)$ and $(Z, \vpi,
\sS)$ are conjugate systems.
\end{definition}

We refer to the homeomorphism extension $(\wti{X}, \si, \sS)$ of
$(X, \si, \sS)$ as the \emph{canonical} homeomorphism extension.

\begin{lemma} \label{l:minext}
The canonical homeomorphism extension of $(X, \si,\sS)$ is a minimal
extension.
\end{lemma}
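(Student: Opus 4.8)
The plan is to show that any dynamical system $(Z, \vpi, \sS)$ lying between $(\wti X, \wti\si, \sS)$ and $(X, \si, \sS)$ — with extension maps $p: \wti X \to Z$ and $q: Z \to X$ satisfying $q \circ p$ equal to the canonical projection — must have $p$ a homeomorphism. Since $p$ is already a continuous surjection between compact metric spaces, it suffices to prove $p$ is injective.

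First I would use the defining feature of $\wti X$: a point of $\wti X$ is a coherent tuple $(x_g)_{g \in \sG}$, and the coordinate $x_g$ is recovered from any point $w \in \wti X$ as $x_g = p_0(\wti\si_{-g}(w))$, where $p_0 = p: \wti X \to X$ is the canonical map and $\wti\si_{-g}$ makes sense because $\sG$ acts by homeomorphisms on $\wti X$ (Corollary~\ref{c:homext}). So two points of $\wti X$ coincide iff all their $\sG$-translates have the same image under the canonical projection to $X$. Now suppose $p(w) = p(w')$ for $w, w' \in \wti X$. Because $Z$ is an extension of $X$ via $q$ and the diagram commutes, $\vpi_t \circ p = p \circ \wti\si_t$ for all $t \in \sS$; more importantly, since the $\wti\si_t$ are homeomorphisms on $\wti X$, I would argue that $p$ intertwines the $\sG$-action on $\wti X$ with a (possibly only partially defined a priori, but in fact well-defined) action on the relevant part of $Z$. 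The key point: $p(\wti\si_t(w)) = \vpi_t(p(w)) = \vpi_t(p(w')) = p(\wti\si_t(w'))$ for every $t \in \sS$, and applying $q$ and using $q \circ p = \pi_{\mathrm{can}}$ (the canonical projection $\wti X \to X$) gives $\pi_{\mathrm{can}}(\wti\si_t(w)) = \pi_{\mathrm{can}}(\wti\si_t(w'))$, i.e. $\si_t(x_0) = \si_t(x_0')$ where $x_0 = \pi_{\mathrm{can}}(w)$, $x_0' = \pi_{\mathrm{can}}(w')$.

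That last identity alone is not enough — I need equality of $x_g$ for all $g \in \sG$, not just $g \in \sS$. Here is where I use that $\sG = \sS - \sS$ and that the $\wti\si_t$ are invertible: for $g = s - t$ with $s, t \in \sS$, write $\wti\si_{-g} = \wti\si_t \circ \wti\si_s^{-1}$, and chase the equality $p(w) = p(w')$ through $\wti\si_{-g}$ using $p \circ \wti\si_s^{-1} = \vpi_s^{-1} \circ p$ (valid because $\vpi_s$ is... not necessarily invertible on $Z$!). This is the main obstacle: the intermediate system $(Z, \vpi, \sS)$ need not have invertible $\vpi_s$, so I cannot naively push the $\sG$-action down to $Z$. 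To get around it I would instead push the equality \emph{up} from $X$ through $q$: the coordinate comparison $x_g = x_g'$ for $g \in \sG$ reduces, via $x_{s-t} = \si_t$-preimage data, to comparing finitely many $\si$-images in $X$, and the coherence conditions defining $\wti X$ (Proposition~\ref{p:inversesys}) force these to propagate. Concretely, for $g < 0$, i.e. $g = -u$ with $u \in \sS$, we have $x_0 = \si_u(x_{-u})$ and $x_0' = \si_u(x_{-u}')$ with $x_0 = x_0'$; this does \emph{not} give $x_{-u} = x_{-u}'$ directly since $\si_u$ is non-injective, and that is exactly the subtlety — but it is resolved by observing that $x_{-u}$ and $x_{-u}'$ are themselves $p$-images of $\wti\si_{-u}$-translates in $\wti X$ where $\wti\si_{-u}$ \emph{is} a bijection, and the previous paragraph's argument applies to those translates, reducing to the $\sS$-case we already handled.

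So the order of steps is: (1) recall $\sG$ acts by homeomorphisms on $\wti X$ and that points of $\wti X$ are determined by the $X$-images of all their $\sG$-translates; (2) from $p(w) = p(w')$ and commutativity of the between-diagram deduce $\pi_{\mathrm{can}}(\wti\si_t(w)) = \pi_{\mathrm{can}}(\wti\si_t(w'))$ for all $t \in \sS$; (3) upgrade from $\sS$ to $\sG$ by writing $g = s - t$ and using that $\wti\si_s, \wti\si_t$ are homeomorphisms on $\wti X$ to reduce the comparison at index $g$ to a comparison at an index in $\sS$ (for a suitably translated pair of points), thereby concluding $x_g = x_g'$ for all $g$; (4) conclude $w = w'$, so $p$ is injective, hence a homeomorphism, hence $(Z, \vpi, \sS)$ is conjugate to $(\wti X, \wti\si, \sS)$. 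I expect step (3) — correctly handling the non-invertibility of $\vpi_s$ on the intermediate system by always working on the $\wti X$ side where everything is invertible — to be the delicate part; the rest is diagram-chasing with the explicit description of $\wti X$.
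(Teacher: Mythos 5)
There is a genuine gap, and it sits exactly where you predicted: step (3) is circular. To apply your ``$\sS$-case'' argument to the translated pair $v=\wti{\si}_u^{-1}(w)$, $v'=\wti{\si}_u^{-1}(w')$, you need $p(v)=p(v')$ as input. But from $p(w)=p(w')$ all you can extract is $\vpi_u(p(v))=p(\wti{\si}_u(v))=p(w)=p(w')=\vpi_u(p(v'))$, and cancelling $\vpi_u$ requires precisely the injectivity of $\vpi_u$ on $Z$ that you have declined to assume. Your proposed resolution (``the previous paragraph's argument applies to those translates'') therefore assumes the very equality it is meant to establish. And no repair is possible at this level of generality: the system $Z=X$ with $\vpi=\si$, $p$ the canonical projection $\wti{X}\to X$ and $q=\mathrm{id}$ literally ``lies between'' $(\wti{X},\wti{\si},\sS)$ and $(X,\si,\sS)$ in the sense of the commuting diagram, yet $p$ is far from injective whenever the $\si_t$ are non-invertible. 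So the statement you are trying to prove --- minimality against \emph{arbitrary} intermediate dynamical systems --- is false.

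The intended reading of Definition~\ref{d:minext} (and the one the paper's proof actually uses) is that the intermediate system $(Z,\vpi,\sS)$ is itself a homeomorphism extension, i.e.\ each $\vpi_t$ is a homeomorphism. Once you grant that, your outline closes up immediately and cleanly: the $\sG$-action descends to $Z$ via $\vpi_g=\vpi_s\circ\vpi_t^{-1}$ for $g=s-t$, the intertwining $p\circ\wti{\si}_g=\vpi_g\circ p$ holds for all $g\in\sG$, and applying $q$ gives $\pi_{\mathrm{can}}(\wti{\si}_g(w))=\pi_{\mathrm{can}}(\wti{\si}_g(w'))$ for every $g$, which by your observation (1) forces $w=w'$. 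That route is arguably tidier than the paper's, which instead invokes the preceding lemma to identify $Z$ with $\wti{Z}$ and then builds an explicit conjugacy $\Ga(x)=(\vpi_s^{-1}(p(x)))_s$ with inverse $\Pi((y_s)_s)=(q(y_s))_s$ --- note that the paper's formulas also lean on $\vpi_s^{-1}$, so both arguments genuinely need the homeomorphism hypothesis on $Z$. Rewrite your proof with that hypothesis stated up front and it is correct; as written, it proves a false statement and the gap cannot be patched.
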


\begin{proof} Assume that we have the following commutative diagram
\[ \begin{CD}
\wti{X}    @>\wti{\si_t}>>    \wti{X} \\
@VpVV       @VpVV \\
Z   @>\vpi_t>>     Z \\
@VqVV       @VqVV \\
X   @>\si_t>>     X
\end{CD} \]
where $\wti{\si_t}$ and $\vpi_t$ are homeomorphisms for every $t$
and $p$ and $q$ are surjections with $ q \circ p ((x_s)_{s \in \sS})
= x_0$.  By the preceding lemma we know that $\wti{Z}$ and $Z$ are
conjugate and hence we will show that $\wti{Z}$ and $\wti{X}$ are
conjugate.

For notational purposes we will interchange the notations $x =
(x_s)_{s\in \sS}$ for an element of $\wti{X}$ as necessary. We
define a map $\Ga: \wti{X} \rightarrow \wti{Z}$ by $\Ga ((x_s)_{s\in
\sS}) = (\vpi_s^{-1}(p(x)))_{s \in \sS}$. The map $\Ga$ is clearly
continuous.  On the other hand notice that $\vpi_t(
\vpi_{s+t}^{-1}(p(x)) ) = \vpi_s^{-1}(p(x))$ and hence $\Ga(x) \in
\wti{Z}$.

Now if $(y_s)_{s\in \sS} \in \wti{Z}$ then there exists $x \in
\wti{X}$ such that $p(x) = y_0$ since $p$ is surjective.  Also $y_s
= \vpi_s^{-1}(y_0)$ and hence $\Ga(x) = (y_s)_{s \in \sS}$ and hence
$\Ga$ is onto.

To see that $\Ga$ is one-to-one consider the map $\Pi: \wti{Z}
\rightarrow \wti{X}$ given by \[ \Pi((y_s)_{s \in \sS}) = (q(y_s))_{s
\in \sS}. \]  Notice that $\vpi_t(q(y_{s+t})) = q(\vpi_t(y_{s+t})) =
q(y_s)$ and so the map $\Pi$ does map into $\wti{X}$.  Now we see
that
\begin{align*} \Pi \circ \Ga (x) & = (q(\vpi_s^{-1}(p(x))))_{s
\in \sS} \\ & = (q(p \circ \wti{\si_s}^{-1}(x)))_{s \in \sS}\\ &= (q
\circ p((x_{t+s})_{t \in \sS}))_{s \in \sS} \\ &= (x_s)_{s\in \sS} =
x.
\end{align*}  It follows that $\Ga$ is one-to-one and hence
$\Ga$ is a homeomorphism.  The conjugacy follows immediately from
the commutative diagram. \end{proof}

The next theorem now follows immediately.

\begin{theorem} \label{t:homeoext} The dynamical system $(X, \si,
\sS)$ has a minimal homeomorphism extension, which is unique up to
conjugacy.
\end{theorem}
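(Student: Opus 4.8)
The statement to prove — Theorem~\ref{t:homeoext} — asserts two things: \emph{existence} of a minimal homeomorphism extension of $(X, \si, \sS)$, and its \emph{uniqueness} up to conjugacy. Both are essentially corollaries of the work already assembled, so the proof will be short, consisting of assembling the pieces rather than new computation. For existence, I would simply invoke the canonical homeomorphism extension $(\wti{X}, \wti{\si}, \sS)$ constructed via the inverse limit: Proposition~\ref{p:homext} shows it is a homeomorphism extension, and Lemma~\ref{l:minext} shows it is minimal. So existence is immediate.

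**For uniqueness**, suppose $(Y, \be, \sS)$ is any minimal homeomorphism extension of $(X, \si, \sS)$, say via extension map $r$. I want to show $(Y, \be, \sS)$ is conjugate to $(\wti{X}, \wti{\si}, \sS)$. The natural strategy is to exhibit a dynamical system that "lies between" the two in the sense of Definition~\ref{d:minext}, and then apply the minimality of each. Concretely: since $(\wti{X}, \wti{\si}, \sS)$ is a homeomorphism extension of $(X, \si, \sS)$ and $(Y, \be, \sS)$ is an extension of $(X, \si, \sS)$, one can form the fibre product $Y \times_X \wti{X} = \{(y, \ti x) : r(y) = p(\ti x)\}$, with the diagonal semigroup action $\be_t \times \wti{\si}_t$. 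This is a compact metric space (closed subset of $Y \times \wti{X}$), the coordinate projections are continuous surjections onto $Y$ and onto $\wti{X}$ respectively (surjectivity since both $r$ and $p$ are surjections of compact spaces and one can lift), and the semigroup acts by homeomorphisms (a product of homeomorphisms on an invariant set). Thus $Y \times_X \wti{X}$ is a homeomorphism extension lying between $(Y, \be, \sS)$ and $(X, \si, \sS)$, and also lying between $(\wti{X}, \wti{\si}, \sS)$ and $(X, \si, \sS)$. Applying the minimality of $(Y, \be, \sS)$ gives that the projection $Y \times_X \wti{X} \to Y$ is a homeomorphism; applying the minimality of $(\wti{X}, \wti{\si}, \sS)$ (Lemma~\ref{l:minext}) gives that the projection $Y \times_X \wti{X} \to \wti{X}$ is a homeomorphism. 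Composing these, $Y$ and $\wti{X}$ are conjugate, which is the desired uniqueness.

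**The main obstacle** I anticipate is a bookkeeping subtlety rather than a conceptual one: verifying that the fibre product genuinely "lies between" in the precise sense of Definition~\ref{d:minext}, i.e.\ that the relevant triangle of extension maps commutes ($q \circ p = r$ style conditions) and that the projections are surjective. Surjectivity of the projection $Y \times_X \wti{X} \to \wti{X}$ requires that over each $\ti x \in \wti{X}$ there is some $y \in Y$ with $r(y) = p(\ti x)$, which holds because $r$ is surjective; surjectivity onto $Y$ is similar using $p$. One must also check the projection maps intertwine the actions, which is immediate from the diagonal definition. A secondary point: the definition of "lies between" as written uses the extension map $r$ with $r((x_s)) = x_1$ in one place and $x_0$ in another (a minor inconsistency in the surrounding text between identity "$0$" and "$1$" of $\sG$), so I would state the argument in terms of whichever normalization is fixed, noting it makes no difference. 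Once these routine checks are in place, the theorem "follows immediately" exactly as the text promises, and I would present it in two or three sentences rather than belabor it.
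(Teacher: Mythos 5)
Your existence argument is exactly what the paper intends (the paper gives no proof beyond ``follows immediately,'' with existence supplied by Proposition~\ref{p:homext} and Lemma~\ref{l:minext}), but the uniqueness half via the fibre product has two genuine gaps. First, $W = Y \times_X \wti{X}$ with the diagonal action is in general not a homeomorphism extension, and not even a dynamical system in the paper's sense: $\be_t \times \wti{\si}_t$ carries $W$ into $W$, but its restriction to $W$ need not be \emph{onto} $W$. The unique preimage of $(y', \ti{x}') \in W$ is $(\be_t^{-1}(y'), \wti{\si}_t^{-1}(\ti{x}'))$, and the required identity $r(\be_t^{-1}(y')) = p(\wti{\si}_t^{-1}(\ti{x}'))$ only follows after applying the non-injective map $\si_t$ to both sides. (For $z \mapsto z^2$ on the circle with $Y = \wti{X}$ the solenoid, take two solenoid points with equal $0$th coordinates but different $(-1)$st coordinates.) So ``a product of homeomorphisms on an invariant set'' is injective but not surjective. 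Second, and more fundamentally, the direction of ``lies between'' in Definition~\ref{d:minext} is the opposite of what you need: there $(Z,\vpi,\sS)$ lies between $(Y,\be,\sS)$ and $(X,\si,\sS)$ when there are surjections $Y \to Z \to X$ composing to $r$, i.e.\ $Z$ is an intermediate \emph{factor} of $Y$, and minimality concludes that the map $Y \to Z$ is a homeomorphism. Your $W$ surjects \emph{onto} $Y$ and onto $\wti{X}$; it is an extension of both, not a factor of either, so the minimality hypothesis of Definition~\ref{d:minext} never applies to it.

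The route consistent with the paper's machinery (and implicit in the construction of $\Ga$ in the proof of Lemma~\ref{l:minext}) is: given any minimal homeomorphism extension $(Y,\be,\sS)$ with extension map $r$, extend $\be$ to a $\sG$-action as in Corollary~\ref{c:homext} and define $\Theta: Y \to \wti{X}$ by $\Theta(y) = \bigl(r(\be_{-g}(y))\bigr)_{g \in \sG}$. This is continuous, intertwines the actions, and satisfies $p \circ \Theta = r$; its image is a closed $\sG$-invariant subset of $\wti{X}$ all of whose coordinate projections are onto $X$, and a cofinality-plus-compactness argument shows such a subset must be all of $\wti{X}$. Hence $\wti{X}$ genuinely lies between $Y$ and $X$ in the sense of Definition~\ref{d:minext}, and minimality of $Y$ forces $\Theta$ to be a homeomorphism, giving $Y \cong \wti{X}$. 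Note that the surjectivity of $\Theta$ is a real step, not bookkeeping, so even the paper's ``follows immediately'' is overstated.
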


\subsection{DUALIZING THE CANONICAL HOMEOMORPHISM EXTENSION}
\label{s:injlim}

Let $\sA = C(X)$, and $\al_s$ be the endomorphism $\al_s(f) = f\circ
\si_s,\ s \in \sS,\ f \in C(X).$  Define a partial order on $\sS$ by
$ t \succ s$ if there exists $u \in \sS$ such that $t = u+s.$ Now
the diagram
\[
\begin{CD}
\sA   @>\al_s>>   \sA \\
@V\al_{t}VV        @V\al_uVV \\
\sA   @=          \sA
\end{CD}
\]
commutes, so we can form the inductive system $(\sA, \al_s)$ with
respect to the order $\succ.$ Let $\wti{\sA} = \vil (\sA_s, \al_s)$
where $\sA_s = \sA$ for all $s$, and let $\io_s$ be the canonical
embeddings of $\sA \to \wti{\sA}.$ Thus we have the commutative
diagram
\[
\begin{CD}
\sA    @>\al_s>>    \sA \\
@V\io_{t+s}VV       @V\io_tVV \\
\wti{\sA}   @=     \wti{\sA}
\end{CD}
\]

Now we define $\wti{\al_s}: \wti{\sA} \to \wti{\sA}$ as follows: if
$\wti{a} \in \wti{\sA}$ there exists a $t \in \sS$ and $ a \in \sA$
such that $\wti{a} = \io_t(a).$ Define
\[ \wti{\al_s}(\wti{a}) = \io_t(\al_s(a)) .\]
This is well defined by the commutativity of the diagrams. Since
$\al_s,\ s \in \sS,$ is linear and injective, the same is true for
$\wti{\al_s}.$ We show $\wti{\al_s}$ is invertible.

Let $\wti{a} \in \wti{\sA}$ be given; say $\wti{a} = \io_t(a)$ for
some $t \in \sS,\ a \in \sA.$ We can assume $ t \succ s,$ say $ t =
u+s$ for some $u \in \sS.$ Then
\[ \wti{a} = \io_t(a) = \io_u\circ \al_s(b)\]
for some $b \in \sA.$  Thus, $\wti{a} = \wti{\al}_s(\wti{b})$ where
$\wti{b} = \io_u(b).$

Now the mappings $\al_s,\ \io_t,\ ( s, \ t \in \sS)$ are isometric
and $^*$-maps (i.e. $\al_s(\ol{a}) = \ol{\al_s(a)}$ ) hence
$\wti{\sA}$ is the direct limit of $C^*$-algebras, so that the
completion of $\wti{\sA}$ is a commutative C$^*$-algebra, $C(Z)$.
The automorphisms $\wti{\al}_s$ are isometric on $\wti{\sA}$, hence
extend to automorphisms, also denoted $\wti{\al}_s$, of $C(Z).$
Thus, by the Banach-Stone Theorem, there is a homeomorphism $\vpi_s$
of $Z$ such that $\wti{\al}_s(f) = f\circ\vpi_s,\ f \in C(Z),\ s \in
\sS$.

Let $j$ be the embedding $C(X) \mapsto C(\wti{X})$ given by $j(f) =
f\circ p$ where $p: \wti{X} \to X$ is the canonical map. Now for $s
\in \sS$ let $\be_s: \sA \to C(\wti{X})$ be the map $\be_s(f) =
j(f)\circ\wti{\si}_{-s}.$ Then the diagram
\[
\begin{CD}
\sA    @>\al_u>>    \sA  \\
@V\be_sVV          @V\be_{s+u}VV \\
C(\wti{X})   @=      C(\wti{X})
\end{CD}
\] commutes.
Thus, by properties of direct limits, there is a star homomorphism
$\Psi: \wti{\sA} \to C(\wti{X}).$ Since the maps $\be_s$ are
isometric, so is $\Psi,$ hence $\Psi$ extends to a map (also denoted
$\Psi$) of $C(Z) \to C(\wti{X}).$

Now the embedding $C(Z) \to C(\wti{X})$ yields a map $p: \wti{X} \to
Z$ as follows: let $\wti{x}$ be a pure state on $C(\wti{X}),$ which
we identify with a point of $\wti{X}.$ Restricting $\wti{x}|_{C(Z)}$
yields a pure state of $C(Z),$ which is canonically identified with
a point of $Z$.

We observe that the diagram
\[
\begin{CD}
\wti{X}      @>\wti{\si}_s>>      \wti{X} \\
@VpVV                           @VpVV  \\
Z           @>\vpi_s>>          Z
\end{CD}
\] commutes. By the minimal extension property of $\wti{X}$ (cf Lemma~\ref{l:minext}), $p$ is a
homeomorphism. Thus, $C(\wti{X})$ is the (completion of) the direct
limit of the directed system $(C(X),\al_s)$.

\section{THE C$^*$-ENVELOPE}

\begin{theorem} \label{t:cstarenv} The C$^*$-envelope of the left regular algebra
$\sA(X, \sS)$ is the crossed product $C(\wti{X})\rtimes_{\ti{\al}}
\sG.$
\end{theorem}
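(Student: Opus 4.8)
The plan is to combine the two earlier identifications of the C$^*$-envelope. By Theorem~\ref{t:corresp}, the C$^*$-envelope of $\sA(X,\sS)$ is the Cuntz--Pimsner algebra $\sO(\sE)$ of the C$^*$-correspondence $\sE$ over $C(X)$ built from the surjections $\si_s$. On the other hand, Section~\ref{s:injlim} shows that the completion of the direct limit of the inductive system $(C(X),\al_s)$ (indexed by $\sS$ under the order $\succ$) is $C(\wti X)$, and the induced automorphisms $\wti{\al}_s$ are exactly the ones dual to the homeomorphisms $\wti{\si}_s$ of the canonical homeomorphism extension $\wti X$. So the theorem will follow once I identify $\sO(\sE)$ with the crossed product $C(\wti X)\rtimes_{\wti\al}\sG$.

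First I would make the structure of $\sE$ explicit: $\sE$ has the orthonormal basis $\{\ep_s\}_{s\in\sS}$ over $C(X)$, with left action $\vpi(f)\ep_s = \ep_s\cdot(f\circ\si_s)$. Because each $\si_s$ is surjective, $\vpi$ is injective and the correspondence has no obstruction, so $\sO(\sE)$ is the quotient of the Toeplitz algebra by the ideal generated by $\vpi(f) - \psi(\vpi(f))$, where $\psi$ is the Katsura ideal embedding; here one checks $J_\sE = C(X)$ so that $\sO(\sE)$ is the genuine Cuntz--Pimsner algebra in which the generators $T_{\ep_s}$ become the images of the $S_s$, now satisfying $T_{\ep_s}^* T_{\ep_s} = I$ and $\sum$-type Cuntz relations encoding surjectivity. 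Then I would invoke the gauge-invariant uniqueness theorem (Katsura): a representation of $\sO(\sE)$ is faithful iff it is injective on $C(X)$ and admits a compatible gauge ($\bbT$-)action. This reduces the problem to producing a concrete faithful covariant representation on $C(\wti X)\rtimes_{\wti\al}\sG$ and a gauge action.

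Next I would build the comparison map. In the crossed product $C(\wti X)\rtimes_{\wti\al}\sG$ we have the canonical unitaries $U_g$ ($g\in\sG$) implementing $\wti\al_g$, and the embedded copy $C(X)\hookrightarrow C(\wti X)$ via $f\mapsto f\circ p$. The assignment $S_s\mapsto U_s$ (for $s\in\sS\subseteq\sG$) and $f\mapsto f\circ p$ respects the covariance relations $(\dag)$, since $\wti\al_s$ dualizes $\wti\si_s$ and $\wti\si_s\circ p = p\circ\si_s$; moreover $U_s$ is a unitary, hence an isometry, so this extends to a $*$-homomorphism from the Toeplitz algebra, and one verifies it kills the Katsura ideal, giving $\Phi:\sO(\sE)\to C(\wti X)\rtimes_{\wti\al}\sG$. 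Surjectivity is clear: the $U_s$ and $U_s^{-1}=U_{-s}$ together with $C(X)\circ p$ generate the whole crossed product because $\sG=\sS-\sS$ and $\bigcup_s\al_{-s}(C(X)) $ is dense in $C(\wti X)$ by the direct-limit description. For injectivity, the restriction of $\Phi$ to $C(X)$ is $f\mapsto f\circ p$, which is injective because $p$ is surjective; and the dual action of $\Ga=\widehat\sG$ on the crossed product restricts to a $\bbT$-action along any chosen character direction, or more cleanly, the $\bbT$-gauge action on $\sO(\sE)$ intertwines with the natural circle action on $C(\wti X)\rtimes_{\wti\al}\sG$ coming from a grading. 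By the gauge-invariant uniqueness theorem, $\Phi$ is injective, hence an isomorphism.

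The main obstacle I expect is the bookkeeping at the Cuntz--Pimsner level: precisely identifying Katsura's ideal $J_\sE$ for this (infinitely generated, in general) correspondence and checking that surjectivity of the $\si_s$ forces $J_\sE=C(X)$, so that $\sO(\sE)$ really is the full Cuntz--Pimsner algebra and the relations become $T_{\ep_s}^*T_{\ep_s}=I$ with the left action recovered from the $T_{\ep_s}$; and then producing a genuinely gauge-equivariant isomorphism rather than merely a surjection. Once the relations are pinned down, matching them to the crossed-product relations is routine, since the direct-limit computation of Section~\ref{s:injlim} already tells us that $C(\wti X)$ is exactly the universal C$^*$-algebra generated by $C(X)$ and the ``inverses'' $\al_s^{-1}$ compatibly, which is what the Cuntz relations for a correspondence of isometries with surjective structure maps encode. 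An alternative, perhaps cleaner, route would bypass $\sO(\sE)$ and instead show directly that $C(\wti X)\rtimes_{\wti\al}\sG$ contains a completely isometric copy of $\sA(X,\sS)$ and is generated by it, and then verify the Shilov/boundary property, i.e. that the only boundary ideal is zero; but using Theorem~\ref{t:corresp} together with Katsura's theory is the shorter path.
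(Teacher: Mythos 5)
Your main route breaks down at the point where you try to identify $\sO(\sE)$ with the crossed product, and the problem is structural, not bookkeeping. The correspondence $\sE$ has an orthonormal basis $\{\ep_s\}_{s\in\sS}$ indexed by the (infinite) semigroup, and the left action is $\vpi(f)=\mathrm{diag}(f\circ\si_s)_{s\in\sS}$. Since each $\si_s$ is surjective, $\|f\circ\si_s\|=\|f\|$ for every $s$, so this diagonal operator is approximable by finite-rank operators only when $f=0$; hence Katsura's ideal is $J_\sE=0$, not $C(X)$, and $\sO(\sE)=\sT(\sE)$ carries no Cuntz-type relations. Worse, the assignment $\ep_s\mapsto U_s$, $f\mapsto f\circ p$ is not even a Toeplitz representation of $\sE$: one needs $\La(\ep_s)^*\La(\ep_t)=\la(\langle\ep_s,\ep_t\rangle)=\de_{s,t}I$, whereas $U_s^*U_t=U_{t-s}\neq 0$ for $s\neq t$. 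So the $*$-homomorphism $\Phi$ you want from the universal property does not exist, and it could not be injective in any case: in $\sT(\sE)$ the generators satisfy $T_{\ep_s}T_{\ep_t}\neq T_{\ep_t}T_{\ep_s}$ (because $\ep_s\otimes\ep_t\perp\ep_t\otimes\ep_s$ in $\sE^{\otimes 2}$) and $T_{\ep_s}T_{\ep_s}^*\neq I$, while the $U_s$ commute and are unitary. The single correspondence $\sE$ simply does not encode the commutativity of $\sS$, so the gauge-invariant uniqueness theorem cannot be used to produce the desired isomorphism along these lines.

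The route you mention only in passing at the end --- embed $\sA(X,\sS)$ completely isometrically into $C(\wti{X})\rtimes_{\wti{\al}}\sG$, observe that it generates, and show the Shilov (boundary) ideal is trivial --- is in fact the paper's proof, and it is the one that works. Concretely, the paper builds representations $\wti{\pi}_{\hat{x},\ga}$ of the crossed product on $\ell_2(\sG(\hat{x}))$ whose norms on elements $\wti{F}=\sum U_s j(f_s)$ agree with $\|\pi_{x,\ga}(F)\|$, giving complete isometry of $j$; then, given $H$ in the kernel of the quotient onto the C$^*$-envelope, it approximates $H$ by $F=\sum U_{g_i}j(f_i)$ and multiplies by the unitary $U=U_{t_1+\cdots+t_n}$ (with $g_i=s_i-t_i$) so that $FU\in j(\sA(X,\sS))$, where the quotient map must be isometric --- a contradiction. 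If you want to salvage a ``universal algebra'' approach, you would need product systems over $\sS$ rather than a single correspondence; as written, the proposal does not prove the theorem.
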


\begin{proof} Define representations $\wti{\pi}_{\hat{x}, \ga}$ for $x
\in X$ and $\ga \in \Ga$ of the crossed product
$C(\wti{X})\rtimes_{\ti{\al}}\sG $ as follows. Let $\hat{x}$ be the
subset $p^{-1}(x) \subset \wti{X},$ where $p$ is the map $\wti{X}
\to X$ given in Definition~\ref{p:homext}. The Hilbert space is
$\ell_2(\sG(\hat{x})),$  where $\sG(\hat{x})$ denotes the union of
the orbits $\sG(\ti{x})$ for $\ti{x} \in \hat{x}.$ If $U_g$ is the
unitary element in the crossed product associated with the
homeomorphism $\wti{\si}_g,$ the representation is given by
\[\wti{\pi}_{\hat{x}, \ga}(U_g)\xi_{\ti{x}} =  \langle\ga,
g\rangle\xi_{\wti{\si}(\ti{x})}\] where $\xi_{\ti{x}}$ is the
function in $\ell_2(\sG(\hat{x}))$ which is $1$ at $\ti{x}$ and zero
elsewhere. And for $\ti{f} \in C(\wti{X}),\ \wti{\pi}_{\hat{x},
\ga}(\ti{f})\xi_{\ti{x}}  = \ti{f}(\ti{x})\xi_{\ti{x}}.$

Since the direct sum of the representations $\ti{f} \to
\ti{f}\large| \hat{x}$ (that is, the restriction of $\ti{f}$ to the
subset $\hat{x} \subset \wti{X}$) of $C(\wti{X})$ is faithful, it
follows that the the supremum of the norms of the representations
$\wti{\pi}_{\hat{x}, \ga}$ is faithful on the crossed product, since
$\sG$ is abelian, hence amenable. (cf \cite[7.7.5]{Pet})  Indeed,
this holds even if $\ga$ is taken to be the trivial character.

Since $C(X)$ is embedded in $C(\wti{X})$ by the map $j(f) = f\circ
p,$ it follows that $\wti{\pi}_{\hat{x}, \ga}(j(f))\xi_{\ti{y}} =
f(y)\xi_{\ti{y}} $ for $\ti{y} \in \sS(\ti{x})$ with $p(\ti{y}) = y
\in X,$ since $j(f)$ is constant on the subset $\hat{y} \subset
\wti{X},$ and that constant is $f(y).$

Let $F \in \sA_0,$ say $F = \sum S_s f_s$ (where the sum if finite),
let $\wti{F} = \sum U_s j(f_s).$ Then we have that
\[ ||\wti{\pi}_{\hat{x}, \ga}(\wti{F})|| = ||\pi_{x, \ga}(F)|| .\]
It follows that equality holds for $F \in \sA(X, \sS)$ and hence
that the embedding of the left regular algebra $\sA(X, \sS)$ into
the crossed product is completely isometric. We will also denote
this embedding by $j,$ which is consistent if we view $C(X)$ as a
subalgebra of $\sA(X, \sS)$ and $C(\wti{X})$ as a subalgebra of the
crossed product.

To complete the proof, suppose $\sB$ is the C$^*$-envelope of
$\sA(X, \sS),$ and let $k: \sA(X, \sS) \to \sB$ be the completely
isometric embedding.  Then there is surjective C$^*$-homomorphism
$\Phi: C(\wti{X})\rtimes_{\ti{\al}} \sG \to \sB$ such that the
diagram
\[
\begin{CD}
\sA(X, \sS) @>j>>   C(\wti{X})\rtimes_{\ti{\al}} \sG \\
@V{id}VV  @V{\Phi}VV \\
\sA(X, \sS) @>k>> \sB
\end{CD}
\] commutes. It remains to show that $\Phi$ is an isomorphism. Suppose,
to the contrary, there is an element $H \in \text{ker}(\Phi).$ We
may suppose $H$ has norm $1$. $H$ can be approximated by an element
$G$ with $||G - H|| < \frac{1}{4},$ where $G = \sum_{i=1}^n U_{g_i}
h_i$ with $ h_i \in C(\wti{X})$.

From Section~\ref{s:injlim} there exist $f_i \in C(X)$ such that
$||j(f_i) - h_i|| < \frac{1}{4n},\ 1 \leq i \leq n.$  Thus if $F =
\sum U_{g_i} j(f_i),$ then $||F - G|| < \frac{1}{4}.$

Now express $g_i = s_i - t_i,$ where $s_i,\ t_i \in \sS,\ 1 \leq i
\leq n.$ Let $ U = U_{t_1} \cdots U_{t_n} = U_{t_1 +\cdots + t_n}.$
Then $FU \in j(\sA(X, \sS)),$ and since $U$ is unitary in the
crossed product, $||FU|| = ||F||.$

Now $||H - F|| < \frac{1}{2}, $ so that $||\Psi(H - F)|| <
\frac{1}{2}.$  Since $||H|| = 1,$ this implies $||F|| = ||FU||
> \frac{1}{2}.$ Hence,
\begin{align*}
||\Psi(HU - FU)|| &\leq ||\Psi(H - F)|| ||\Psi(U)|| \\
    &\leq ||\Psi(H - F)|| \\
    &< \frac{1}{2}
\end{align*}
whereas, since $\Psi(H) = 0,$
\begin{align*}
||\Psi(HU - FU)|| &= ||\Psi(FU)|| \\
    &> \frac{1}{2}
\end{align*}
since, by the defining property of the C$^*$-envelope, $\Psi$ is
completely isometric on $j(\sA(X, \sS)).$ This contradiction shows
that the kernel of $\Psi$ is trivial, and so the crossed product is
the C$^*$-envelope.

\end{proof}

\begin{corollary} \label{c:dilationn} Given $x \in X,\ \ga \in \Ga,$
\begin{enumerate}
\item
The semigroup $\pi_{x, \ga}(S_s) \ (s \in \sS)$ of commuting
isometries dilates to a commuting semigroup of unitaries;
\item
Assume $x$ has the finite stability property. Then semigroup $\rho_{x, \ga}(S_s)\ (s \in \sS)$ of commuting
contractions dilates to a commuting semigroup of unitaries.
\end{enumerate}
\end{corollary}

\begin{theorem} \label{t:complcontr}  There is a completely contractive
representation \newline
\mbox{$\Pi: C(X) \rtimes_{\si} \sS
\rightarrow \sA(X, \sS).$}
\end{theorem}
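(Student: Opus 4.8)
The plan is to recognize that $\sA(X,\sS)$ is nothing but a quotient of the semicrossed product, so that $\Pi$ can be taken to be the canonical quotient map extending the identity on $\sA_0$. First I would recall that each left regular representation $\pi_{x,\ga}$, $(x,\ga)\in X\times\Ga$, is (by the verifications carried out in Section~\ref{s:leftreg}) an isometric --- hence in particular contractive --- covariant representation of $(C(X),\sS)$ in the sense of Definition~\ref{d:rep}: $\pi_{x,\ga}(S_t)$ is an isometry for every $t$, $\pi_{x,\ga}(S_0)=I$, and $\pi_{x,\ga}|_{C(X)}$ is a $*$-representation. Consequently the $\pi_{x,\ga}$ lie among the representations $\pi$ admitted in the definition of the semicrossed product norm, and therefore
\[ \|F\|_{\sA(X,\sS)} \;=\; \sup_{(x,\ga)\in X\times\Ga}\|\pi_{x,\ga}(F)\| \;\le\; \sup_{\pi}\|\pi(F)\| \;=\; \|F\|_{C(X)\rtimes_{\si}\sS} \]
for every $F\in\sA_0$. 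Hence the identity map on $\sA_0$ is contractive from the semicrossed norm to the left regular norm, and as $\sA_0$ is dense in both completions it extends to a contractive homomorphism $\Pi\colon C(X)\rtimes_{\si}\sS\to\sA(X,\sS)$.

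To promote this to a \emph{completely} contractive map I would simply repeat the comparison one matrix level at a time. The operator algebra structure on $C(X)\rtimes_{\si}\sS$ is the one induced by $\bigoplus_{\pi}\pi$, the sum over all covariant representations allowed in the definition, while that on $\sA(X,\sS)$ is induced by $\bigoplus_{(x,\ga)}\pi_{x,\ga}$ (Definition~\ref{d:leftreg}); since the latter family is a sub-family of the former, for every $n$ and every $[F_{ij}]\in M_n(\sA_0)$ one has
\[ \bigl\|[F_{ij}]\bigr\|_{M_n(\sA(X,\sS))} \;=\; \sup_{(x,\ga)}\bigl\|[\pi_{x,\ga}(F_{ij})]\bigr\| \;\le\; \sup_{\pi}\bigl\|[\pi(F_{ij})]\bigr\| \;=\; \bigl\|[F_{ij}]\bigr\|_{M_n(C(X)\rtimes_{\si}\sS)}. \]
Passing to completions shows that $\Pi$ is completely contractive. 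Its range contains the dense subalgebra $\sA_0$, so $\Pi$ has dense range (one need not, and in general cannot, claim surjectivity).

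There is essentially no obstacle here; the only point that wants a little care is that complete contractivity should be \emph{read off} from the fact that the operator space structure on $\sA(X,\sS)$ is the one carried by a sub-collection of the representations defining the operator space structure on $C(X)\rtimes_{\si}\sS$, rather than inferred abstractly from mere contractivity. If one prefers a picture to a direct comparison, the same map arises as the composite $C(X)\rtimes_{\si}\sS\to\sA(X,\sS)\hookrightarrow C(\wti{X})\rtimes_{\ti{\al}}\sG$, using Theorems~\ref{t:corresp} and~\ref{t:cstarenv} to identify the target of the second, completely isometric, arrow with the C$^*$-envelope; but this adds nothing to the argument above.
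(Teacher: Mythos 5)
Your proof is correct, and the first half (plain contractivity) is exactly the paper's argument: the family $\{\pi_{x,\ga}\}$ is a subfamily of the covariant representations over which the semicrossed product norm is a supremum, so the identity on $\sA_0$ extends to a contraction $\Pi$. Where you diverge is in upgrading to complete contractivity. You do it by running the same supremum comparison at every matrix level, using that the operator space structure on $\sA(X,\sS)$ is the concrete one carried by $\bigoplus_{(x,\ga)}\pi_{x,\ga}$ and that the matricial norms on $C(X)\rtimes_{\si}\sS$ are the corresponding suprema over the full family; this is elementary and self-contained, but it does lean on the (standard, though never explicitly stated in the paper) convention that the semicrossed product's matrix norms are defined by the same supremum at each level $n$ --- you rightly flag this as the one point needing care. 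The paper instead invokes Theorem~\ref{t:cstarenv}: each $\pi_{x,\ga}$ is unitarily equivalent to the restriction of the C$^*$-representation $\wti{\pi}_{\hat{x},\ga}$ of $C(\wti{X})\rtimes_{\ti{\al}}\sG$ to an invariant subspace, hence is completely contractive, and so is the direct sum. Your route has the advantage of not depending on the C$^*$-envelope identification at all (and, arguably, of making the logical comparison between the two matricial structures more transparent); the paper's route buys the extra structural fact that each $\pi_{x,\ga}$ is a compression of a $*$-representation of the envelope, which is precisely what gets reused in Theorem~\ref{t:shilov}. Your closing remark that $\Pi$ need only have dense range (which here is in fact all of $\sA(X,\sS)$, since the image is complete) is a harmless aside; either way the statement of the theorem is established.
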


\begin{proof} Let $ F \in \sA_0.$ The norm of $F$ as an element of
the semicrossed product $C(X) \rtimes_{\si} \sS$ is given as the
supremum over all representations $||\pi(F)||$ which satisfy the
three properties of Definition~\ref{d:rep}. The norm of $F$ as an
element of the left regular algebra $\sA(X, \sS)$ is given as the
supremum over a subset of these representations. Since the
semcrossed product is the completion of $\sA_0$ in the larger norm,
for $F \in \sA(X, \sS),$ we may take
\[ \Pi(F) = \oplus_{(x, \ga) \in X\times \Ga} \pi_{x, \ga}(F) .\]
This yields a contractive map of the semicrossed product into
$\sA(X, \sS).$

To see the map is completely contractive, the proof of
Theorem~\ref{t:cstarenv} shows that the representation $\pi_{x,
\ga}(F)$ is unitarily equivalent to the restriction of
$\wti{\pi}_{\hat{x}, \ga}(F)$ to an invariant subspace. Since this
is a C$^*$-representation, it is completely contractive, and the
same is true of the direct sum of such representations.  Thus the
map $\Pi$ is completely contractive.
\end{proof}

\begin{remark} \label{r:complcontr}
If the map $\Pi$ is not completely isometric, then it would be
interesting to have examples of representations $\pi$ for which the
norm $||\pi(F)||$ is not dominated by the norm of $F$ in $\sA(X,
\sS).$ Conceivably such representations could be orbit
representations for which the associated orbit cocycle is not the
left regular orbit cocycle.  Of course, the existence of such
cocycles will depend on the semigroup $\sS.$  For, say if $\sS =
\bbN,$ then the semicrossed product norm and the left regular norm
coincide.  More generally, what condition on the dynamical system
$\sS$ is needed to insure that the two norms are different?
\end{remark}

Recall the maps $P_s: \sA(X, \sS) \to \sA(X, \sS)$ defined in Section~\ref{s:semicrossed}
Now, with abuse of notation, we define the conditional expectation $P_0$ on $C(\wti{X})\rtimes_{\ti{\al}}
\sG.$ in the same way it was defined on $\sA_0,$ by
\[ P_0(F) = \int_{\Ga} \tau_{\ga}(F)\, d\ga \]
where now $\tau_{\ga}$ acts on the group $\sG.$

 Note that $P_0$ maps onto the subalgebra $C(\wti{X})U_0.$
If we regard $\sA(X, \sS)$ as a subalgebra of its C$^*$-envelope, then the map $P_0$ of Section~\ref{s:semicrossed} coincides
with the restriction of this map $P_0$ to $\sA(X, \sS).$

\begin{proposition} \label{p:condexp}
$P_0$ is a faithful, completely contractive conditional expectation
of $\sA(\sS, X) \to C(X).$
\end{proposition}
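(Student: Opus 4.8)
The plan is to implement the gauge automorphisms $\tau_\ga$ by conjugation with unitaries on the Hilbert space of the defining representation; this both extends $\tau_\ga$ from $\sA_0$ to the completion $\sA(\sS,X)$ and makes the construction and faithfulness of $P_0$ essentially automatic.

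First I would fix the concrete picture. By construction $\sA(\sS,X)=\overline{\sA_0}$ inside $\sB(\sH)$ for $\sH=\bigoplus_{(x,\ga)\in X\times\Ga}\ell_2(\sS)$, via the embedding $\bigoplus_{(x,\ga)}\pi_{x,\ga}$, which is faithful on $\sA_0$ by the corollary following Lemma~\ref{l:regseparating}. For each $\ga\in\Ga$ let $W_\ga\in\sB(\sH)$ be the diagonal unitary acting on every summand $\ell_2(\sS)$ by $\xi_s\mapsto\langle\ga,s\rangle\,\xi_s$. A short computation on generators gives $W_\ga S_t W_\ga^*=\langle\ga,t\rangle S_t$ and $W_\ga f W_\ga^*=f$ for $f\in C(X)$, so $\operatorname{Ad}(W_\ga)$ agrees on $\sA_0$ with the gauge automorphism of Definition~\ref{d:gauge}. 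Hence $\tau_\ga$ extends to a completely isometric automorphism of $\sA(\sS,X)$, since conjugation by a unitary is completely isometric on $\sB(\sH)$ and carries $\overline{\sA_0}$ onto itself. Moreover $\ga\mapsto W_\ga$ is strong-operator continuous --- check this on the dense set of finitely supported vectors, where it reduces to continuity of the characters $\ga\mapsto\langle\ga,s\rangle$, and extend using $\|W_\ga\|=1$ --- so $\ga\mapsto\operatorname{Ad}(W_\ga)(b)$ is strong-operator continuous for every $b\in\sB(\sH)$.

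Now I would define $P_0$ on the completion by the same formula, read as a weak (equivalently strong) operator-valued integral over the compact group $\Ga$ against normalized Haar measure,
\[ P_0(F)=\int_\Ga \operatorname{Ad}(W_\ga)(F)\,d\ga=\int_\Ga W_\ga F W_\ga^*\,d\ga ,\]
which is legitimate by the continuity just noted and satisfies $\|P_0(F)\|\le\int_\Ga\|W_\ga F W_\ga^*\|\,d\ga=\|F\|$; the same argument with $W_\ga\otimes I_n$ on $M_n(\sB(\sH))$ shows $P_0$ is completely contractive. On $\sA_0$ this coincides with the original definition, so $P_0(\sum_s S_s f_s)=f_0\in C(X)$ and $P_0$ is the identity on $C(X)$ (the characters are trivial at $0\in\sS$). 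Since the $\sA(\sS,X)$-norm restricts on $C(X)$ to the supremum norm --- because $\pi_{x,\ga}(f)$ is diagonal with entries $f(\si_s(x))$ and $\si_0=\mathrm{id}$ --- the subalgebra $C(X)$ is closed in $\sA(\sS,X)$, so $P_0$ maps $\sA(\sS,X)$ into $C(X)$ and $P_0^2=P_0$. Finally, since each $\operatorname{Ad}(W_\ga)$ is an algebra homomorphism fixing $C(X)$ we get $\operatorname{Ad}(W_\ga)(fFg)=f\,\operatorname{Ad}(W_\ga)(F)\,g$ for $f,g\in C(X)$, and integrating yields the bimodule identity $P_0(fFg)=f\,P_0(F)\,g$; together with idempotence this is precisely the assertion that $P_0$ is a conditional expectation onto $C(X)$.

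For faithfulness, the one point needing care is that in the nonselfadjoint algebra $\sA(\sS,X)$ the product $F^*F$ lives in the ambient $C^*$-algebra $C^*(\sA(\sS,X))\subseteq\sB(\sH)$; but the integral $\int_\Ga W_\ga b W_\ga^*\,d\ga$ makes sense for every $b\in\sB(\sH)$ by the strong continuity above, so $P_0$ has a natural value on $F^*F$, and faithfulness means this value being $0$ forces $F=0$. Indeed
\[ P_0(F^*F)=\int_\Ga W_\ga F^*F W_\ga^*\,d\ga=\int_\Ga (W_\ga F W_\ga^*)^*(W_\ga F W_\ga^*)\,d\ga \]
is an integral of positive operators, hence positive, and if it vanishes then $\langle W_\ga F^*F W_\ga^*\xi,\xi\rangle=0$ for every $\xi\in\sH$ and every $\ga\in\Ga$, \ie $W_\ga F W_\ga^*=0$ for all $\ga$, so in particular $F=0$. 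I expect the only real friction in the whole argument to be bookkeeping: pinning down $\sA(\sS,X)$ as $\overline{\sA_0}\subseteq\sB(\sH)$, pushing $\tau_\ga$ past $\sA_0$ (handled by the unitary implementation), and justifying the operator-valued integrals (handled by strong continuity of $\ga\mapsto W_\ga$).
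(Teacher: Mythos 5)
Your proof is correct, and it shares the paper's basic strategy --- $P_0$ is the average of the gauge automorphisms $\tau_\ga$ over $\Ga$, whence complete contractivity and the $C(X)$-bimodule property --- but it differs in execution in two worthwhile ways. First, the paper defines $\tau_\ga$ and $P_0$ only on $\sA_0$ and then invokes ``continuity and density'' to evaluate $P_0(F^*F)$ for $F$ in the completion, even though $F^*F$ lives in the ambient C$^*$-algebra rather than in $\sA(\sS,X)$; your spatial implementation $\tau_\ga=\operatorname{Ad}(W_\ga)$, together with the SOT-continuity of $\ga\mapsto W_\ga$, extends $P_0$ to all of $\sB(\sH)$ as a genuine operator-valued integral and so makes that step rigorous --- this is a gap in the paper's write-up that your argument actually closes. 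Second, your faithfulness argument is genuinely different: the paper derives the inequality $P_0(F^*F)\ge P_s(F)^*P_s(F)$ and then appeals to Proposition~\ref{p:nonzeroproj} (that some $P_s(F)\neq 0$), whereas you observe that $\ga\mapsto\|FW_\ga^*\xi\|^2$ is a continuous nonnegative function whose Haar integral is $\langle P_0(F^*F)\xi,\xi\rangle$, so vanishing of the integral forces the integrand to vanish identically (Haar measure has full support), and evaluating at the trivial character gives $F=0$. The paper's route has the side benefit of recording the inequality $(\ddag)$ and tying faithfulness to the projections $P_s$; yours is self-contained, proves faithfulness on the entire positive cone of $\sB(\sH)$, and avoids any dependence on Proposition~\ref{p:nonzeroproj}. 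The remaining details you supply (that $C(X)$ is closed in $\sA(\sS,X)$ because the norm restricts to the sup norm, hence $P_0$ really lands in $C(X)$ and is idempotent) are correct and are points the paper takes for granted.
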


\begin{proof} For $F \in \sA_0,\ F = \sum  S_s \, f_s$ (finite sum),
$P_0(F)  = f_0$ where $S_0 = I$.  So it is evident that
\[ P_0(fF) = fP_0(F) = P_0(Ff) \]
for any $f \in C(X).$

We see that for $F \in \sA_0$ as above,
\[ P_0(F^*F) = \sum |f_s|^2 \] and in particular,
\[ (\ddag)\quad P_0(F^*F) \geq |f_s|^2 = P_s(F)^*P_s(F) \] for any $s$.
So, by continuity of $P_0$ and density of $\sA_0,$ it follows that
$\ddag$ holds for $F \in \sA(\sS, X).$

Now  suppose $F \in \sA(\sS, X)$ and $P_0(F^*F) = 0.$  Then it
follows that $P_s(F) = 0 $ for all $s \in \sS.$  So, by
Proposition~\ref{p:nonzeroproj}, $F = 0.$

The map $F \in \sA(X, \sS) \to \tau_{\ga}(F)$ is completely
isometric.  As $P_0$ is the average of completely isometric maps, it
is completely contractive.
\end{proof}


%

\begin{corollary} There is a completely contractive conditional expectation
\[ C(X)\rtimes_{\si}\sS \to C(X) \]
\end{corollary}

\begin{proof} By Theorem~\ref{t:complcontr} the map $\Pi$ of
$C(X)\rtimes_{\si}\sS $ onto $\sA(X, \sS)$ is completely
contractive, and by Proposition~\ref{p:condexp} the conditional
expectation of $\sA(X, \sS)$ onto $C(X)$ is completely contractive.
The conditional expectation on the semicrossed product is the
composition of the two maps.
\end{proof}


\subsection{SHILOV MODULES} \label{s:shilov}

\begin{definition} \label{d:shilov} Let $\sA$ be an operator
algebra, $\pi: \sA \to \sB(\sH)$ a representation. Then $\pi$ is
said to be a \emph{Shilov} representation if there is a
representation $\Pi$ of the C$^*$-envelope C$^*(\sA)$ in a Hilbert
space $\sK$ containing $\sH$ as a subspace, so that (viewing $\sA$
as a subalgebra of  C$^*(\sA)$, $ \pi(F)$ is the restriction of
$\Pi(F)$ to $\sH, \text{ for all } F \in \sA .$ \cite{Muhly-Solel1}
expresses this in the language of modules: $\sH$ is isomorphic to a
submodule of $\sK$ viewed as an $\sA$-module.

A Hilbert module $\sH$ is said to have a \emph{Shilov resolution} if
there is a short exact sequence of $\sA$ modules
\[ 0 \rightarrow \sK_0 \rightarrow \sK \overset{\Phi}{\rightarrow}
\sH \rightarrow 0 \] where $\sK_0$ and $ \sK$ are Shilov modules.
\end{definition}

Let $\sH_0,\ \pi_{x, \ga}^1,\ \sH_1,\ \pi_{x, \ga}^1$ be the Hilbert
spaces and representations introduced prior to
Definition~\ref{d:regorbitcocycle}. While these were initially
defined as representations of $\sA_0,$ they are uniquely extendible
to representations of $\sA = \sA(X, \sS),$ and it is this context we
consider them here. Following \cite{Muhly-Solel1}, we employ the
language of  Hilbert modules.

For the remainder of this section let us fix $x \in X$ which has the finite stability property, and $\ga \in \Ga.$ View $\sH_0$ as an $\sA$
module via the representation $\pi_{x, \ga}^0,$ $\sH_1$ as an $\sA$
module via the representation $\pi_{x, \ga}^1,$ and $\ell_2(\sS) $
as an $\sA$ module via the representation $ \pi_{x, \ga}.$

\begin{theorem} \label{t:shilov}
\begin{enumerate}
\item $\ell_2(\sS)$ is a Shilov module;
\item $\sH_1$ has a Shilov resolution
\[ 0 \rightarrow \sH_0 \rightarrow \ell_2(\sS)
\overset{Q^{\perp}}{\rightarrow} \sH_1 \rightarrow 0 .\]
\end{enumerate}
\end{theorem}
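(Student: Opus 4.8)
The plan is to deduce both parts from Theorem~\ref{t:cstarenv}, which identifies the C$^*$-envelope of $\sA(X,\sS)$ with the crossed product $C(\wti{X})\rtimes_{\ti{\al}}\sG$, together with the explicit dilation of $\pi_{x,\ga}$ to $\wti{\pi}_{\hat{x},\ga}$ constructed there. For part (1), I would observe that the proof of Theorem~\ref{t:cstarenv} exhibits $\ell_2(\sS)$ as an invariant subspace of $\ell_2(\sG(\hat{x}))$ on which $\wti{\pi}_{\hat{x},\ga}$ restricts to $\pi_{x,\ga}$: namely, the map $\xi_s \mapsto \xi_{\wti{\si}_s(\ti{x})}$ (for a chosen $\ti{x}\in\hat{x}$) identifies $\ell_2(\sS)$ with the closed span of $\{\xi_{\wti{\si}_s(\ti{x})} : s\in\sS\}$, and under this identification $\wti{\pi}_{\hat{x},\ga}(U_s j(f_s))$ agrees with $\pi_{x,\ga}(S_s f_s)$ for $s\in\sS$. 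Since $\wti{\pi}_{\hat{x},\ga}$ is a $*$-representation of the C$^*$-envelope and $\sA(X,\sS)$ sits inside it via $j$ (with $S_s$ mapping to $U_s$, which are among the generators with $s\in\sS\subset\sG$), this is exactly the definition of a Shilov module: $\ell_2(\sS)=\sH$ embeds as a submodule of $\sK=\ell_2(\sG(\hat{x}))$ regarded as a C$^*(\sA)$-module. One should take the Hilbert space $\sK$ in Definition~\ref{d:shilov} to be $\ell_2(\sG(\hat{x}))$ and the representation $\Pi$ of the C$^*$-envelope to be $\wti{\pi}_{\hat{x},\ga}$ (or a direct sum over $x$ if one wants faithfulness, though faithfulness of $\Pi$ is not required by the definition).

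For part (2), the sequence
\[ 0 \rightarrow \sH_0 \rightarrow \ell_2(\sS) \overset{Q^{\perp}}{\rightarrow} \sH_1 \rightarrow 0 \]
is already short exact as Hilbert spaces: $\sH_0$ is the kernel of $Q^{\perp}$ and $\sH_1=Q^{\perp}(\ell_2(\sS))$ by construction. What must be checked is that all three objects are $\sA$-modules and the maps are module maps. The inclusion $\sH_0\hookrightarrow\ell_2(\sS)$ is a module map precisely because $\sH_0$ is invariant under $\pi_{x,\ga}(F)$ for $F\in\sA_0$ (Lemma~\ref{l:invsub}), and hence under $\pi_{x,\ga}(F)$ for all $F\in\sA(X,\sS)$ by continuity; this makes $\pi_{x,\ga}^0$ a genuine representation of $\sA$ on $\sH_0$. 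That $Q^{\perp}$ intertwines $\pi_{x,\ga}$ on $\ell_2(\sS)$ with $\pi_{x,\ga}^1$ on $\sH_1$ is immediate from the definition $\pi_{x,\ga}^1(F)=Q^{\perp}\pi_{x,\ga}(F)|\sH_1$ once one knows $\pi_{x,\ga}^1$ is multiplicative, i.e.\ an actual representation — this uses that $\sH_0$ is invariant, so for $\eta\in\sH_1$ one has $\pi_{x,\ga}(F)\eta = \pi_{x,\ga}^1(F)\eta + (\text{something in }\sH_0)$, whence $Q^{\perp}\pi_{x,\ga}(FG)\eta = Q^{\perp}\pi_{x,\ga}(F)\pi_{x,\ga}(G)\eta = Q^{\perp}\pi_{x,\ga}(F)\pi_{x,\ga}^1(G)\eta = \pi_{x,\ga}^1(F)\pi_{x,\ga}^1(G)\eta$. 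Finally, $\ell_2(\sS)$ is a Shilov module by part (1), and $\sH_0$ is a Shilov module because it is an invariant subspace of a Shilov module: restricting the dilation $\wti{\pi}_{\hat{x},\ga}$ on $\ell_2(\sG(\hat{x}))$ first to $\ell_2(\sS)$ and then noting $\sH_0\subset\ell_2(\sS)$ is $\sA$-invariant, one still needs a C$^*$-dilation of $\pi_{x,\ga}^0$ — but since $\sH_0$ is invariant in $\ell_2(\sS)$ which is invariant in $\ell_2(\sG(\hat{x}))$, $\sH_0$ is itself an invariant (not merely semi-invariant) subspace of the C$^*$-module $\ell_2(\sG(\hat{x}))$...

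Here lies the one genuine subtlety, and I flag it as the main obstacle: $\sH_0$ invariant under the non-selfadjoint algebra $\pi_{x,\ga}(\sA)$ does \emph{not} by itself make $\sH_0$ a Shilov module, since Shilov-ness requires a dilation to a representation of the C$^*$-envelope, and $\sH_0$ need not be invariant under the adjoints $\pi_{x,\ga}(S_t)^*$. The correct argument is that $\sH_0$, being $\pi_{x,\ga}(\sA)$-invariant inside the Shilov module $\ell_2(\sS)$, is a submodule of a Shilov module, and submodules of Shilov modules are Shilov — one takes the same big Hilbert space $\sK=\ell_2(\sG(\hat{x}))$ and the same $\Pi=\wti{\pi}_{\hat{x},\ga}$, and observes $\sH_0\subseteq\ell_2(\sS)\subseteq\sK$ with $\pi_{x,\ga}^0(F)=\Pi(F)|\sH_0$ holding because $\pi_{x,\ga}^0(F)=\pi_{x,\ga}(F)|\sH_0=(\Pi(F)|\ell_2(\sS))|\sH_0=\Pi(F)|\sH_0$, valid since $\sH_0$ is $\Pi(\sA)$-invariant. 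Thus the verification reduces entirely to the invariance statements already established in Lemma~\ref{l:invsub} and the dilation already built in Theorem~\ref{t:cstarenv}, and no new estimate is needed; the whole proof is a matter of unwinding the definition of Shilov module against those two facts.
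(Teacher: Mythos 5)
Your proposal is correct and follows essentially the same route as the paper: part (1) is read off from the dilation of $\pi_{x,\ga}$ to $\wti{\pi}_{\hat{x},\ga}$ constructed in Theorem~\ref{t:cstarenv}, and part (2) follows because $\sH_0$ is an $\sA$-invariant subspace of the Shilov module $\ell_2(\sS)$ (hence Shilov, with the same $\sK$ and $\Pi$) and $\sH_1$ is the corresponding quotient. The extra care you take with the adjoint-invariance worry and the multiplicativity of $\pi_{x,\ga}^1$ is sound but not a departure from the paper's argument.
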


\begin{proof} (1) Theorem~\ref{t:cstarenv} shows that for $ F \in
\sA,\ \pi_{x, \ga}(F)$ is unitarily equivalent to the restriction of
the representation $\wti{\pi}_{\hat{x}, \ga}(F)$ of the
C$^*$-envelope to an invariant subspace.

(2) Since $\pi_{x, \ga}$ is a Shilov representation of $\sA,$ so is
its restriction to an invariant subspace.  Thus $\sH_0$ is a Shilov
module. Since $\sH_1$ is the quotient space $\ell_2(\sS)/\sH_0,$ it
has a Shilov resolution as given in (2).
\end{proof}

Again fixing $ x \in X$ and $\ga \in \Ga,$ and let $\mu = \mu_x$ be
the left regular orbit cocycle, and $\rho_{x, \ga\mu}$ the
associated representation of the orbit space $\ell_2(\sS(x)),$ which
we view as an $\sA(X, \sS)$ module via this representation.

\begin{corollary} \label{c:shilov}
As a left $\sA$-module, the orbit Hilbert space $\ell_2(\sS(x))$ has
a Shilov resolution.
\end{corollary}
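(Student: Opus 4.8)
The plan is to relate the orbit module $\ell_2(\sS(x))$ to the module $\sH_1$ studied in Theorem~\ref{t:shilov}, using the intertwining isometry $W$ constructed just before Corollary~\ref{c:regorbitrep}. Recall that $W : \ell_2(\sS(x)) \to \sH_1$ was shown to carry an orthonormal basis onto an orthonormal basis, so it is a unitary; and Corollary~\ref{c:regorbitrep} gives $W^* \pi_{x, \ga}^1(F) W = \rho_{x, \ga\mu}(F)$ for all $F \in \sA_0$, which extends by continuity and density to all $F \in \sA = \sA(X, \sS)$. Hence $\rho_{x, \ga\mu}$ and $\pi_{x, \ga}^1$ are unitarily equivalent representations of $\sA$, so $\ell_2(\sS(x))$ and $\sH_1$ are isomorphic as $\sA$-modules.

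Given that identification, the corollary follows immediately from part~(2) of Theorem~\ref{t:shilov}: the short exact sequence of $\sA$-modules
\[ 0 \rightarrow \sH_0 \rightarrow \ell_2(\sS) \overset{Q^{\perp}}{\rightarrow} \sH_1 \rightarrow 0 \]
can be composed with the module isomorphism $W^{-1} : \sH_1 \to \ell_2(\sS(x))$ to produce a short exact sequence
\[ 0 \rightarrow \sH_0 \rightarrow \ell_2(\sS) \overset{W^{-1} Q^{\perp}}{\longrightarrow} \ell_2(\sS(x)) \rightarrow 0 \]
in which $\sH_0$ and $\ell_2(\sS)$ are Shilov modules (the latter by Theorem~\ref{t:shilov}(1), the former as the restriction of the Shilov module $\ell_2(\sS)$ to the invariant subspace $\sH_0$, using that restrictions of Shilov representations to invariant subspaces are Shilov). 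This is exactly a Shilov resolution of $\ell_2(\sS(x))$ in the sense of Definition~\ref{d:shilov}.

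I do not anticipate a serious obstacle here; the corollary is essentially a transport-of-structure argument along $W$. The one point requiring a small amount of care is confirming that $W$ is genuinely a module map for the full algebra $\sA$ and not merely for the dense subalgebra $\sA_0$ — but this is handled by the standard density-and-continuity argument, since both $F \mapsto \pi_{x, \ga}^1(F)$ and $F \mapsto \rho_{x, \ga\mu}(F)$ are norm-continuous on $\sA$ and agree after conjugation by $W$ on the dense subalgebra $\sA_0$. A second minor point is to note that the map $W^{-1}Q^\perp$ is a surjective $\sA$-module map with kernel exactly $\sH_0$, which is immediate since $W^{-1}$ is a module isomorphism and $Q^\perp$ already has these properties by Theorem~\ref{t:shilov}(2).
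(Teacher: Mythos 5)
Your proposal is correct and follows essentially the same route as the paper: the authors likewise deduce the corollary from Corollary~\ref{c:regorbitrep}, which gives the unitary equivalence of $\rho_{x,\ga\mu}$ with $\pi_{x,\ga}^1$, and then transport the Shilov resolution of $\sH_1$ from Theorem~\ref{t:shilov}(2). You have merely made explicit the density/continuity step and the composition of the exact sequence with $W^{-1}$, which the paper leaves implicit.
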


\begin{proof} This follows from Corollary~\ref{c:regorbitrep} in
which it is shown that $\rho_{x, \ga\mu}$ is unitarily equivalent to
$\pi_{x, \ga}^1.$
\end{proof}

\begin{remark} \label{r:dilation} For $x \in X $ as above, the family of commuting contractions $\{ \rho_{x, \ga}(S_t):\ t \in \sS\}$ dilates to a commuting family
$\{\pi_{x, \ga}(S_t):\ t \in \sS\}$ of isometries, which in turn has an extension to a commuting family of unitaries acting on $\ell_2(\sG).$
\end{remark}

\noindent
Acknowledgements.  We would like to thank Ken Davidson for a suggestion in section 4.2.  The second author acknowledges partial support from the National Science Foundation, DMS-0750986

\pagebreak

\bibliographystyle{plain}

\end{document}